\newtheorem{thm}{Theorem}[section]
\newtheorem{lemma}[thm]{Lemma}
\newtheorem{prop}[thm]{Proposition}
\newtheorem{cor}[thm]{Corollary}
\theoremstyle{definition}
\newcommand{\ga}{\gamma}
\newcommand{\ep}{\epsilon}
\newcommand{\vph}{\varphi}
\newcommand{\pa}{\partial}
\newcommand{\N}{\mathbb{N}}
\newcommand{\R}{\mathbb{R}}
\newcommand{\mcd}{\mathcal{D}}
\newcommand{\wtu}{\widetilde{U}}
\newcommand{\wtv}{\widetilde{V}}
\renewcommand{\(}{\left(}
\renewcommand{\)}{\right)}
\begin{document}
\title[non-degeneracy of the Lane-Emden system]{A perturbative approach to \\ non-degeneracy of the Lane-Emden system}

\author[S. Kim]{Seunghyeok Kim}
\address{Department of Mathematics and Research Institute for Natural Sciences, College of Natural Sciences, Hanyang University, 222 Wangsimni-ro Seongdong-gu, Seoul 04763, Republic of Korea}
\email{shkim0401@hanyang.ac.kr shkim0401@gmail.com}

\author[A. Pistoia]{Angela Pistoia}
\address{Dipartimento SBAI, Universit\`{a} di Roma ``La Sapienza", via Antonio Scarpa 16, 00161 Roma, Italy}
\email{pistoia@dmmm.uniroma1.it}

\begin{abstract}
We consider ground state solutions of the critical Lane-Emden system
$$\begin{cases}-\Delta u = v^p &\text{in } \R^n,\\
-\Delta v = u^q &\text{in } \R^n,\\
u,v >0\ &\text{in } \R^n,\end{cases}
$$
where $n \ge 3$ and $p,q>0$  and $(p,q)$ belongs to the {\em critical hyperbola}
$\frac{1}{p+1} + \frac{1}{q+1} = \frac{n-2}{n}.$
We prove that they are {\em non-degenerate} when either $(p,q)$ is close to $(1,{n+4\over n-4})$ (if $n\ge5$) or
$(p,q)$ is close to $({n+2\over n-2},{n+2\over n-2})$ (if $n\ge3$).
\end{abstract}

\date{\today}
\subjclass[2010]{35J47, 35B40}
\keywords{Lane-Emden system, critical hyperbola, non-degenerate solution}
\thanks{S. Kim was partially supported by Basic Science Research Program through the National Research Foundation of Korea(NRF) funded by the Ministry of Education (NRF2017R1C1B5076384).
A. Pistoia was partially supported by Fondi di Ateneo ``Sapienza" Universit\`a di Roma (Italy).}
\maketitle

\allowdisplaybreaks
\numberwithin{equation}{section}
\maketitle

\section{Introduction}
We consider the critical Lane-Emden system
\begin{equation}\label{eq-LEs}
\begin{cases}
-\Delta u = v^p &\text{in } \R^n,\\
-\Delta v = u^q &\text{in } \R^n,\\
u,v >0\ &\text{in } \R^n
\end{cases}
\end{equation}
where $n \ge 3$ and $p,q>0$  and $(p,q)$ belongs to the {\em critical hyperbola}
\begin{equation}\label{eq-hyp}
\frac{1}{p+1} + \frac{1}{q+1} = \frac{n-2}{n}.
\end{equation}

For $s \ge 1$, let $\mcd^{2,s}_0(\R^n)$ be the completion of $C^{\infty}_c(\R^n)$ with respect to the norm $\|\Delta \cdot\|_{L^s(\R^n)}$.
In \cite[Corollary I.2]{Li}, Lions found a positive ground state
\[(U,V) \in \mcd^{2, {p+1 \over p}}_0(\R^n) \times \mcd^{2, {q+1 \over q}}_0(\R^n)\]
of \eqref{eq-LEs}, by transforming it into an equivalent scalar equation
\begin{equation}\label{eq-LEsc}
(-\Delta) \(|\Delta u|^{{1 \over p}-1} (-\Delta u)\) = |u|^{q-1}u \quad \text{in } \R^n
\end{equation}
and employing a concentration-compactness argument to the associated minimization problem
\begin{equation}\label{eq-K_pq}
K_{p,q} = \inf \left\{\|\Delta u\|_{L^{p+1 \over p}(\R^n)}: \|u\|_{L^{q+1}(\R^n)} = 1 \right\}
= \inf_{u \in \mcd^{2, {p+1 \over p}}_0(\R^n) \setminus \{0\}} \frac{\int_{\R^n} |\Delta u|^{p+1 \over p}}{(\int_{\R^n} |u|^{q+1})^{\frac{p+1}{p(q+1)}}}.
\end{equation}
As shown by Alvino et al. \cite{ALT}, it is always radially symmetric and decreasing in $r = |x|$, after a suitable translation.
Moreover, Hulshof and Van der Vorst \cite{HV} proved that a positive radial solution of \eqref{eq-LEs} is unique up to scalings.

\medskip
The present paper deals with non-degeneracy of ground state solutions $(U,V)$ to \eqref{eq-K_pq}
(for which we may assume that $U(0)=1$ without loss of generality).
The invariance of the system under scaling and translations leads to natural solutions of the linearized system around the radial solution $(U,V)$. More precisely, the functions
$$ (U_{\delta,\xi}(x),V_{\delta,\xi}(x)) := \(\delta^{2(p+1) \over pq-1} U(\delta(x-\xi)), \delta^{2(q+1) \over pq-1} V(\delta(x-\xi))\) \quad \hbox{for any}\ \delta>0,\ \xi\in\R^n$$
are solutions to system \eqref{eq-LEs}. If we differentiate the system
$$\begin{cases}
-\Delta U_{\delta,\xi} = V^p_{\delta,\xi} &\text{in } \R^n,\\
-\Delta V_{\delta,\xi} = U^q_{\delta,\xi} &\text{in } \R^n
\end{cases}
$$
with respect to the parameters at $\delta=1$ and $\xi=0,$
we immediately see that the $(n+1)$ linearly independent functions
\[Z_0(x):= \(x\cdot\nabla U + \frac {2(p+1)}{pq-1} U, x\cdot\nabla V + \frac {2(q+1)}{pq-1} V\) \quad \hbox{and} \quad  Z_i(x):= \(\frac{\pa U}{\pa x_i}, \frac{\pa V}{\pa x_i}\)\]
for $i=1,\dots,n$ solves the linear system
\begin{equation}\label{lin}\begin{cases}
-\Delta \phi = pV^{p-1}\psi &\text{in } \R^n,\\
-\Delta \psi = qU^{q-1}\phi &\text{in } \R^n.\\
\end{cases}
\end{equation}
We say that $(U,V)$ is {\em non-degenerate} if all weak solutions to the linear system \eqref{lin} such that $\lim_{|x|\to\infty} (\phi(x),\psi(x)) = (0,0)$ are linear combinations of $Z_0,Z_1,\dots,Z_n.$

\medskip
The non-degeneracy of the solutions of system \eqref{eq-LEs} is a key ingredient in understanding the blow-up phenomena of solutions to the Lane-Emden systems with critical growth.
Therefore,  it is quite natural to ask the following question:
$$\hbox
{\em (Q) \quad  Are ground states $(U,V)$ non-degenerate?}$$

Here, we face the above question, and we give a positive answer in a perturbative setting.
It would be extremely interesting to prove or to disprove non-degeneracy of ground state solutions when $(p,q)$ ranges along all the critical hyperbola \eqref{eq-hyp}.

\medskip
Our main result is:
\begin{thm}\label{main}
There is a small number $\ep > 0$ such that if either $|p-1| \le \ep_0$ (with $n\ge 5$) or
$|p-{n+2\over n-2}| \le \ep$ (with $n\ge3$),
then the unique positive solution $(U,V)$ of \eqref{eq-LEs} (with $U(0) = 1$) is non-degenerate.
\end{thm}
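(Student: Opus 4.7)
\medskip
\noindent\emph{Proof plan.} The strategy is a compactness--contradiction argument that perturbs off the two endpoints of the critical hyperbola at which \eqref{eq-LEs} collapses to a scalar problem whose non-degeneracy is known. At the symmetric point $p=q=\frac{n+2}{n-2}$ one has $U=V$ solving $-\Delta U=U^{(n+2)/(n-2)}$, and the combinations $w^\pm=\phi\pm\psi$ decouple \eqref{lin} into
\[
-\Delta w^+ = pU^{p-1}w^+ \qquad\text{and}\qquad -\Delta w^- + pU^{p-1}w^- = 0.
\]
The first is the Jacobi equation of the Aubin--Talenti bubble, whose decaying kernel is the $(n+1)$-dimensional span of the scaling and translation modes, while the second admits only $w^-\equiv 0$ because $pU^{p-1}>0$. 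At the second endpoint $p=1$, $q=\frac{n+4}{n-4}$ (requiring $n\ge 5$), the relation $\psi=-\Delta\phi$ rewrites \eqref{lin} as the linearized critical biharmonic equation $\Delta^2\phi=qU^{q-1}\phi$, whose non-degeneracy is classical. In both cases the decaying kernel of the coupled system is exactly $\mathrm{span}\{Z_0,\ldots,Z_n\}$.

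Since $(U_p,V_p)$ is radial, I would decompose any decaying solution of \eqref{lin} into spherical harmonics; the modes $\ell=0$ and $\ell=1$ already account for $Z_0$ and $Z_1,\ldots,Z_n$ respectively, so the task reduces to ruling out extra decaying solutions in each spherical mode for $(p,q)$ close to either endpoint. Suppose by contradiction that this fails along some sequence $p_k\to p_*$ with $q_k$ on the hyperbola: take nontrivial decaying radial pairs $(\phi_k,\psi_k)$ of \eqref{lin} at $(U_{p_k},V_{p_k})$ in a fixed mode, orthogonal to the natural kernel, and normalize them in a weighted $L^\infty$ norm calibrated to the asymptotics
\[
U_p(r)\sim c_1\, r^{-\alpha(p,q)}, \qquad V_p(r)\sim c_2\, r^{-\beta(p,q)}, \qquad \alpha=\tfrac{2(p+1)}{pq-1},\ \beta=\tfrac{2(q+1)}{pq-1}.
\]
A subsequential limit $(\phi_*,\psi_*)$ then solves the linearized endpoint problem in the same mode, decays at infinity and is orthogonal to the endpoint kernel; scalar non-degeneracy at the endpoint forces $(\phi_*,\psi_*)\equiv 0$. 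Hence the normalized mass of $(\phi_k,\psi_k)$ must concentrate at spatial infinity, and rescaling around that escaping profile produces a nontrivial decaying solution of the linearization about the singular solution $(c_1r^{-\alpha_*},c_2r^{-\beta_*})$ of \eqref{eq-LEs} on $\R^n\setminus\{0\}$. Because the scale-invariance identity $(p-1)\beta+(q-1)\alpha=4$ holds along the entire critical hyperbola, this rescaled equation has homogeneous (Emden--Fowler) structure, and a Liouville-type argument rules out nontrivial decaying solutions, contradicting the normalization.

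\medskip
\noindent\emph{Main obstacle.} The principal technical difficulty is uniform control of the coupled linearization as $p$ crosses either endpoint, because the natural functional framework varies with $p$. Near $p=1$ the space $\mcd^{2,(p+1)/p}_0(\R^n)$ deforms and the naive scalar elimination $\psi=-(1/p)V^{1-p}\Delta\phi$ introduces a weight $V^{1-p}$ blowing up at infinity; the remedy is to stay within the coupled pair $(\phi,\psi)$ and to prove weighted pointwise estimates for both components, so that the limit is the constant-coefficient biharmonic equation. Near the symmetric endpoint the decoupling $w^\pm$ is only approximate when $p\ne q$; one splits $(\phi_k,\psi_k)$ into nearly symmetric and antisymmetric parts and absorbs the $O(|p-q|)$ cross terms using the strict coercivity of $-\Delta+pU^{p-1}$ on the antisymmetric piece and of the Jacobi operator on the orthogonal complement of its kernel. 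Setting up these uniform weighted estimates mode-by-mode, and verifying that they persist in the limit, is the technical heart of the proof.
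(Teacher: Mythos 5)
Your overall strategy---a compactness/contradiction argument reducing to the known non-degeneracy of the Aubin--Talenti bubble at $p=q=\frac{n+2}{n-2}$ and of the critical biharmonic bubble at $(p,q)=(1,\frac{n+4}{n-4})$, with the decoupling $w^\pm=\phi\pm\psi$ at the symmetric point and the substitution $\psi=-\Delta\phi$ at $p=1$---is exactly the route the paper takes. But there are two genuine problems with the way you propose to close the argument. First, the asymptotics you calibrate your weighted norms to are wrong: the ground states do \emph{not} behave like the scale-invariant (Emden--Fowler) exponents $\alpha=\frac{2(p+1)}{pq-1}$, $\beta=\frac{2(q+1)}{pq-1}$. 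By Hulshof--van der Vorst one has $V_p(r)\sim r^{-(n-2)}$ always, and $U_p(r)\sim r^{-(p(n-2)-2)}$ for $p<\frac{n}{n-2}$ (so $\sim r^{-(n-4)}$ near $p=1$, not $r^{-(n-4)/2}$) and $\sim r^{-(n-2)}$ for $p>\frac{n}{n-2}$. Since your entire blow-up-at-infinity mechanism, and the choice of weights, is pinned to the homogeneous profile $(c_1r^{-\alpha},c_2r^{-\beta})$, the dichotomy ``limit vanishes $\Rightarrow$ mass escapes to infinity along the singular solution'' does not match the actual decay of $(U_p,V_p)$, and the Liouville theorem you invoke for the linearization about the singular solution is asserted, not proved; it is a delicate indicial-root analysis in its own right and is not needed.

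The paper avoids this entirely. It normalizes $\|\Delta\phi_k\|_{L^2}=1$ and then proves, by an iterated Calder\'on--Zygmund/rescaling scheme followed by comparison-function (maximum principle) arguments, \emph{uniform pointwise decay} of the linearized solutions themselves: $|\phi_k(x)|\le C(1+|x|)^{-(n-4-\eta)}$ and $|\Delta\phi_k(x)|\le C(1+|x|)^{-(n-2-\eta)}$. These bounds both force the weak limit $\phi_\infty$ to solve the limiting scalar linearized equation and, crucially, rule out vanishing by a direct identity: testing the equation with $\phi_k$ itself gives $\int_{\R^n}(-\Delta U_{p_k})^{\frac{1}{p_k}-1}(\Delta\phi_k)^2=1+o(1)$, whence $\int_{\R^n}U_1^{8/(n-4)}\phi_\infty^2=\frac{n-4}{n+4}\ne 0$; orthogonality to the kernel then gives the contradiction. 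The missing content in your proposal is precisely these uniform decay estimates for $(U_{p_k},V_{p_k})$ and for $(\phi_k,\psi_k)$ (the paper's Lemmas on tail estimates and Steps 1--3 of its main theorem), which you correctly identify as the main obstacle but do not supply; without them, neither your weighted-norm normalization nor your escape-to-infinity alternative is justified, and the latter additionally rests on incorrect decay rates.
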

Theorem \ref{main} follows immediately by Corollaries \ref{cor1} and \ref{cor2}, proved in Sections \ref{sec_p1} and \ref{sec_pn}, respectively.
The idea of the proof stems from the simple fact that if $p$ is close to $1$ or close to $n+2\over n-2$,
system \eqref{eq-LEs} is formally close to a single equation whose solutions are non-degenerate.
In particular, if $p$ is close to 1, then $q$ is close to
$ {n+4\over n-4} $ and system \eqref{eq-LEs} can be regarded as a perturbation of the Paneitz-Branson equation
\[(-\Delta)^2u=u^{n+4\over n-4} \quad \text{in } \R^n,\]
while if $p$ is close to ${n+2 \over n-2}$, then $q$ is also close to ${n+2 \over n-2}$ and system \eqref{eq-LEs} becomes a perturbation of the Yamabe equation
\[-\Delta u=u^{n+2\over n-2} \quad \text{in } \R^n.\]
Therefore, to prove our result, we will follow a perturbation argument, which has been successfully applied in various problems like the pseudo-relativistic Hartree equations \cite{Le},
the fractional Schr\"odinger equations \cite{FV} and the Choquard equations \cite{Xi}.
The most challenging part of the proof is to show rigorously that the linearized system \eqref{lin} is close to the corresponding linearized (single) equation,
because sophisticated uniform estimates in $p$ and $q$ of the decay of ground state solutions are required.

\medskip \noindent \textbf{Notations.}
\begin{itemize}
\item[-] For $a \in \R$, let $a_+ = \max\{a,0\}$.
\item[-] For any $x \in \R^n$ and $R > 0$, let $B_R(x) = \{y \in \R^n: |y-x| < R\}$.
\item[-] For a set $D \subset \R^n$, let $\chi_D$ be the characteristic function of $D$.
\item[-] The letters $C$ and $c$ denote positive numbers independent of $p$ that may vary from line to line and inside the same line.
\end{itemize}

\section{Non-degeneracy of the Lane-Emden system near $p = 1$}\label{sec_p1}
The main results of this section are Theorem \ref{thm-deg} and Corollary \ref{cor1}.
To prove them, we will use the following well-known uniqueness and non-degeneracy results about the fourth-order critical equation
\begin{equation}\label{eq-LEbi}
\begin{cases}
(-\Delta)^2 u = u^{n+4 \over n-4} &\text{in } \R^n,\\
u > 0 &\text{in } \R^n
\end{cases}
\end{equation}
for $n \ge 5$.
\begin{prop}\label{prop-lin-bi}
\textnormal{(1) (uniqueness)} Any smooth solution of \eqref{eq-LEbi} is expressed as
\begin{equation}\label{eq-w_lam}
w_{\delta,\xi}(x) := c_n \(\frac{\delta}{\delta^2+|x-\xi|^2}\)^{n-4 \over 2}
\end{equation}
for some $\delta > 0$, $\xi \in \R^n$ and $c_n = [n(n-4)(n-2)(n+2)]^{-{n-4 \over 8}}$.

\medskip \noindent \textnormal{(2) (non-degeneracy)}
The solution space of the linear equation
\begin{equation}\label{eq-lin-bi}
(-\Delta)^2 \phi = \(\frac{n+4}{n-4}\) u^{8 \over n-4} \phi \quad \text{in } \R^n, \quad \phi \in \mcd^{2,2}_0(\R^n)
\end{equation}
is spanned by
\[\frac{\pa u}{\pa x_1},\, \cdots,\, \frac{\pa u}{\pa x_n} \quad \text{and} \quad x \cdot \nabla u + \(\frac{n-4}{2}\)u.\]
\end{prop}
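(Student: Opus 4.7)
The plan is to establish the two parts separately. Part (1) is the classification of entire solutions due to Lin, while part (2) follows from a spherical-harmonics decomposition combined with the conformal invariance of the Paneitz operator.

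For part (1), the first step is to upgrade any finite-energy solution of \eqref{eq-LEbi} to a smooth solution with sharp decay. Elliptic regularity gives smoothness and $u \in L^{2n/(n-4)}(\R^n)$; the equation is then equivalent to the integral identity
\[u(x) = \gamma_n \int_{\R^n} \frac{u(y)^{(n+4)/(n-4)}}{|x-y|^{n-4}}\, dy,\]
from which one deduces $u(x) = O(|x|^{-(n-4)})$ at infinity. Since the Kelvin transform maps solutions to solutions, I would apply the integral-form method of moving planes to the above identity to obtain radial symmetry about some point $\xi$. For radial solutions, the Emden-Fowler substitution $u(r) = r^{-(n-4)/2} v(\log r)$ yields an autonomous fourth-order ODE whose bounded solutions can be classified by phase-plane analysis, producing exactly the family $w_{\delta,\xi}$ in \eqref{eq-w_lam}.

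For part (2), after normalizing to $u = w_{1,0}$ by the equivariance of \eqref{eq-lin-bi} under translations and dilations, I would decompose $\phi \in \mcd^{2,2}_0(\R^n)$ into spherical harmonics
\[\phi(x) = \sum_{k=0}^\infty \phi_k(r)\, Y_k(x/|x|),\]
so that \eqref{eq-lin-bi} separates into the radial fourth-order ODEs
\[\left(-\pa_r^2 - \tfrac{n-1}{r}\pa_r + \tfrac{k(k+n-2)}{r^2}\right)^{\!2} \phi_k = \tfrac{n+4}{n-4}\, w_{1,0}^{8/(n-4)}\, \phi_k.\]
Via the inverse stereographic projection $\R^n \to S^n \setminus \{\mathrm{north\ pole}\}$, the flat Paneitz operator is intertwined with the spherical Paneitz operator $P_4^{S^n}$, and $w_{1,0}^{8/(n-4)}$ pulls back to a constant multiple of the volume density. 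Each mode then reduces to an eigenvalue problem $P_4^{S^n} \Phi = \lambda \Phi$ whose spectrum is explicit: the $k=0$ block contributes the one-dimensional span of the scaling generator $x\cdot\nabla u + \frac{n-4}{2} u$, the $k=1$ block contributes the $n$ translations $\pa u/\pa x_i$, and for each $k\ge 2$ the eigenvalue does not match, forcing the $\mcd^{2,2}_0$-admissible solution to vanish.

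The main obstacle I anticipate lies in part (1): running the moving plane argument requires verifying both the sharp decay at infinity and the correct behavior at the origin of the Kelvin-transformed function, together with a Hopf-type boundary lemma suitable for the fourth-order setting. For part (2), the technical burden is matching the admissible growth/decay classes on $S^n$ with the weighted Sobolev class $\mcd^{2,2}_0(\R^n)$ under the conformal change, and justifying the $L^2$-convergence of the spherical-harmonics expansion in this weighted sense; both points are standard once the separation of variables is properly set up.
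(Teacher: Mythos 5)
The paper does not prove this proposition at all: it is quoted from the literature, with part (1) attributed to Lin \cite{Lin} and part (2) to Lu--Wei \cite{LW}. So any self-contained argument you give is by definition a ``different route'' from the paper; what you have actually written is a reconstruction of the cited proofs, and it should be judged as such. Your part (2) is sound in outline and is essentially the standard argument: after normalizing $u=w_{1,0}$, the conformal equivalence with the Paneitz operator $P_4$ on $S^n$ turns the linearized equation into $P_4\Phi=\frac{n+4}{n-4}\,\lambda_0\,\Phi$, where $\lambda_0=\frac{n(n-4)(n^2-4)}{16}$ is the zeroth Paneitz eigenvalue; since $\frac{n+4}{n-4}\lambda_0=\lambda_1=\frac{(n+4)(n+2)n(n-2)}{16}$ and the eigenvalues $\lambda_k=\frac{\Gamma(k+\frac n2+2)}{\Gamma(k+\frac n2-2)}$ are strictly increasing, the kernel is exactly the $(n+1)$-dimensional space of degree-one spherical harmonics, which pull back to the translations and the dilation generator. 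The identification $\mcd^{2,2}_0(\R^n)\cong H^2(S^n)$ under the conformal factor is the only point to check, and you have flagged it.

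Part (1) as you sketch it has a genuine gap at the very first step. You write that ``elliptic regularity gives smoothness and $u\in L^{2n/(n-4)}(\R^n)$'' and then pass to the integral identity. For an arbitrary smooth positive solution (which is what the proposition asserts), no such global integrability is available a priori, and the equivalence between the PDE and the Riesz-potential identity is precisely the hard part of Lin's paper. Two things must be proved first: (i) $-\Delta u>0$ everywhere (via a spherical-average argument: if $-\Delta u(x_0)<0$ somewhere, the averages of $u$ about $x_0$ force a contradiction with positivity), which makes the system $-\Delta u=v$, $-\Delta v=u^{(n+4)/(n-4)}$ cooperative and makes the representation formula meaningful; and (ii) the integrability of $u^{(n+4)/(n-4)}$ against $|x-y|^{-(n-4)}$ together with a Liouville-type argument showing that the biharmonic remainder $u-\gamma_n|x|^{-(n-4)}\ast u^{(n+4)/(n-4)}$ vanishes. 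Without (i)--(ii) your moving-plane step has nothing to act on. Two smaller points: the ``phase-plane analysis'' of the Emden--Fowler ODE is really a four-dimensional dynamical system, and it is cleaner to bypass it by running the method of moving spheres on the integral identity, which yields the explicit form $w_{\delta,\xi}$ directly rather than only radial symmetry. If you only need the proposition for the finite-energy ground states appearing elsewhere in the paper, your reduction is legitimate, but then you should state that restriction rather than claim the full classification.
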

\begin{proof}
Results (1) and (2) have been proved by Lin \cite{Lin} and Lu and Wei \cite{LW}, respectively.
\end{proof}

\subsection{A compactness result}\label{subsec-cpt}
The following is our main result in this subsection.
\begin{prop}\label{prop-cpt}
Suppose that $n \ge 5$. Let $\{p_k\}_{k=1}^{\infty}$ be a sequence such that $p_k \in (\frac{2}{n-2}, \frac{n}{n-2})$ for all $k \in \N$ and $p_k \to 1$ as $k \to \infty$,
Also, let $\{(U_{p_k}, V_{p_k})\}_{k=1}^{\infty}$ be a sequence of the unique positive ground states of \eqref{eq-LEs} with $p = p_k$ such that $U_{p_k}(0) = 1$.
Then we have that
\[(U_{p_k}, V_{p_k}) \to (U_1, V_1) \quad \text{in } \mcd^{2,2}_0(\R^n) \times \mcd^{2,{2n \over n+4}}_0(\R^n) \quad \text{as } k \to \infty.\]
Here $U_1$ is the unique positive solution of \eqref{eq-LEbi} with $U_1(0) = 1$ and $V_1 = -\Delta U_1$ in $\R^n$.
In other words, $(U_1, V_1) = (w_{a_n,0}, -\Delta w_{a_n,0})$ in $\R^n$ where $a_n := c_n^{2 \over n-4}$.
\end{prop}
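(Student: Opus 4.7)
The plan is to pass to the limit in the scalar reformulation \eqref{eq-LEsc} obtained by eliminating $V$. Setting $V_{p_k}=(-\Delta U_{p_k})^{1/p_k}$, the function $U_{p_k}$ solves
\[
(-\Delta)\bigl[|\Delta U_{p_k}|^{1/p_k-1}(-\Delta U_{p_k})\bigr]=U_{p_k}^{q_{p_k}}\quad\text{in }\R^n,
\]
where $q_{p_k}\to (n+4)/(n-4)$ by the critical hyperbola. Formally, at $p=1$ this degenerates to the Paneitz--Branson equation \eqref{eq-LEbi}, whose positive solutions are uniquely classified by Proposition~\ref{prop-lin-bi}(1). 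My strategy is therefore: (i) obtain $p$-uniform a~priori bounds on $(U_{p_k},V_{p_k})$; (ii) extract a subsequential pointwise limit and show it is a nontrivial ground state of \eqref{eq-LEbi}; (iii) identify it with $(w_{a_n,0},-\Delta w_{a_n,0})$ via the normalization $U_{p_k}(0)=1$; (iv) upgrade to strong convergence in the stated spaces.

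For step~(i) I would test \eqref{eq-K_pq} against a fixed Paneitz--Branson bubble to obtain $\limsup_k K_{p_k,q_k}\le K_{1,(n+4)/(n-4)}$, and hence uniform bounds on $\|\Delta U_{p_k}\|_{L^{(p_k+1)/p_k}(\R^n)}$ and $\|U_{p_k}\|_{L^{q_k+1}(\R^n)}$. Radial monotonicity from \cite{ALT} gives $U_{p_k}\le U_{p_k}(0)=1$ in $\R^n$, and bootstrapping elliptic regularity through both equations of \eqref{eq-LEs} yields uniform $C^{2,\alpha}_{\mathrm{loc}}$ control. A diagonal extraction then produces radial nonincreasing pointwise limits $U_{p_k}\to U_\infty$ and $V_{p_k}\to V_\infty$ with $U_\infty(0)=1$, and a straightforward passage to the limit in the weak formulation shows that $(U_\infty,V_\infty)$ solves the limit system.

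The principal obstacle will be step~(ii): upgrading the local convergence to convergence in the critical Sobolev spaces without losing mass at infinity. This requires uniform decay estimates for $U_{p_k}$ and $V_{p_k}$ that do not deteriorate as $p_k\to 1$, matching the limiting decays $|x|^{-(n-4)}$ and $|x|^{-(n-2)}$ of the Paneitz--Branson bubble and its Laplacian. I would attempt these through a Kelvin-transform barrier argument for the radial ODE underlying \eqref{eq-LEsc}, pushing the a~priori integrability into pointwise decay; alternatively, a concentration-compactness scheme for \eqref{eq-LEsc} seems feasible, since $U_{p_k}(0)=1$ pins the concentration scale and rules out vanishing, while the ground state property excludes dichotomy. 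This is precisely the ``sophisticated uniform estimate in $p$ and $q$'' flagged as the hardest part in the introduction.

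Once $(U_\infty,V_\infty)$ is shown to be a nontrivial finite-energy solution of the Paneitz--Branson system, $U_\infty$ is a positive radial solution of $(-\Delta)^2U=U^{(n+4)/(n-4)}$ with $U_\infty(0)=1$, so Proposition~\ref{prop-lin-bi}(1) forces $U_\infty=w_{a_n,0}$ and hence $V_\infty=-\Delta w_{a_n,0}$ by the first equation of the system. Strong convergence in $\mcd^{2,2}_0(\R^n)\times\mcd^{2,2n/(n+4)}_0(\R^n)$ then follows from the convergence of Sobolev norms $K_{p_k,q_k}\to K_{1,(n+4)/(n-4)}$, combined with a Brezis--Lieb splitting in the two relevant Lebesgue spaces; uniqueness of the limit yields convergence of the whole sequence.
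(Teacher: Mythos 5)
Your outline follows the same skeleton as the paper's argument: uniform bounds from testing \eqref{eq-K_pq} with the Paneitz--Branson bubble, local elliptic compactness, identification of the limit via Lin's classification, and strong convergence once uniform decay is available. The genuine gap is exactly at the step you yourself flag as the principal obstacle: you do not actually establish the $k$-uniform decay estimates, you only name two candidate techniques, and neither is likely to work as stated. A Kelvin-transform barrier for the scalar equation \eqref{eq-LEsc} is problematic because the operator $(-\Delta)\bigl(|\Delta u|^{1/p-1}(-\Delta u)\bigr)$ is not conformally covariant for $p\ne 1$, so the inversion does not turn the problem into one admitting a barrier at the origin; and concentration-compactness at best yields compactness in the energy space (no loss of mass), not the pointwise rates $|x|^{-(n-4)+o(1)}$ and $|x|^{-(n-2)}$ that you correctly identify as necessary to pass to the limit in the norms $\|\Delta U_{p_k}\|_{L^2}=\|V_{p_k}\|_{L^{2p_k}}^{p_k}$ and $\|\Delta V_{p_k}\|_{L^{2n/(n+4)}}=\|U_{p_k}\|_{L^{2nq_k/(n+4)}}^{q_k}$. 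Note that these are the norms you must control: the bound on $\|\Delta U_{p_k}\|_{L^{(p_k+1)/p_k}}$ coming from the minimization does not by itself give boundedness in $\mcd^{2,2}_0(\R^n)$.

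What the paper does at this step is elementary but quantitative. First, since testing with $w_{a_n,0}$ and Fatou's lemma sandwich $\lim_k\int_{\R^n} U_{p_k}^{q_k+1}$ between $\int_{\R^n} w_{a_n,0}^{2n/(n-4)}$ from below and from above, all inequalities are equalities and the tails $\int_{\R^n\setminus B_R(0)}U_{p_k}^{q_k+1}$ are uniformly small for $R$ large. Second, splitting $U_{p_k}$ into inner and outer parts in the Green representation $V_{p_k}=\gamma_n\,|\cdot|^{-(n-2)}\ast U_{p_k}^{q_k}$ and applying the Hardy--Littlewood--Sobolev and H\"older inequalities with carefully chosen exponents yields a uniform $L^{n/(n-2)+\eta}$ bound on $V_{p_k}$; radial monotonicity converts this into a pointwise bound $V_{p_k}(x)\le C|x|^{-(n-2)+\eta}$, and two further convolution estimates upgrade this to $U_{p_k}\le C(1+|x|)^{-(n-4-\eta_0)}$ and $V_{p_k}\le C(1+|x|)^{-(n-2)}$. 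With these in hand the norm convergences follow from dominated convergence together with the $C^2_{\mathrm{loc}}$ convergence, with no need for a Brezis--Lieb splitting in Lebesgue spaces of varying exponent. To complete your write-up you should either reproduce this potential-theoretic bootstrap or supply a genuinely different, fully detailed derivation of the uniform decay.
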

As we will see, the proofs of the above proposition and Theorem \ref{thm-deg} require uniform upper bound of $(U_{p_k}, V_{p_k})$'s.
It is useful to recall the asymptotic profile of ground state solutions. In \cite{HV}, it has been shown that there exists a pair of positive constants $(\alpha_p, \beta_p)$ such that
\begin{equation}\label{eq-dec}
\lim_{r \to \infty} r^{n-2}\, v(r) = \beta_p\ \hbox{and}\ \begin{cases}
\lim\limits_{r \to \infty} r^{n-2}\, u(r) = \alpha_p &\text{if } p \in (\frac{n}{n-2}, \frac{n+2}{n-2}],\\
\lim\limits_{r \to \infty} \dfrac{r^{n-2}}{\log r}\, u(r) = \alpha_p &\text{if } p = \frac{n}{n-2} ,\\
\lim\limits_{r \to \infty} r^{p(n-2)-2}\, u(r) = \alpha_p &\text{if } p \in (\frac{2}{n-2}, \frac{n}{n-2}).
\end{cases}
\end{equation}

Even though \eqref{eq-dec} depicts the precise asymptotic behavior of $(U_{p_k}, V_{p_k})$ for each $k \in \N$,
it does not readily imply the uniform bound, because the arguments in \cite{HV} do not describe how the sequence $\{(\alpha_{p_k}, \beta_{p_k})\}_{k=1}^{\infty}$ behaves.
In the next two lemmas, we will obtain it by using potential theory. It is not sharp but enough for our purpose.

Without loss of generality, we can assume that $|p_k - 1| \le \ep_0$ for all $k \in \N$ and a small fixed number $\ep_0 > 0$.
Let $q_k$ be the number $q$ determined by \eqref{eq-hyp} with $p = p_k$.

\begin{lemma}\label{lem-cpt-11}
There exist a constant $C > 0$ depending only on $n$ and $\ep_0$ such that
\begin{equation}\label{eq-cpt-11}
U_{p_k}(x) \le 1 \quad \text{and} \quad V_{p_k}(x) \le C \quad \text{for all } x \in \R^n \text{ and } k \in \N
\end{equation}
provided $\ep_0 > 0$ small enough.
\end{lemma}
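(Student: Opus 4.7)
The bound $U_{p_k} \le 1$ is immediate from the radial monotonicity of ground states \cite{ALT} together with the normalization $U_{p_k}(0)=1$. Since $V_{p_k}$ is also radial and decreasing, the uniform upper bound for $V_{p_k}$ reduces to bounding $\beta := V_{p_k}(0)$. Note that $\beta$ is finite: $-\Delta V_{p_k} = U_{p_k}^{q_k}$ has right-hand side bounded by $1$, so elliptic regularity makes $V_{p_k}$ smooth near the origin.

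The plan is a short two-step ODE comparison that exploits $0 \le U_{p_k} \le 1$ in succession. Writing $U=U_{p_k}$, $V=V_{p_k}$, $p=p_k$, $q=q_k$, I first use $-r^{n-1}V'(r) = \int_0^r t^{n-1} U^q \,dt \le r^n/n$ to deduce $-V'(r) \le r/n$, then integrate once more to obtain
\[
V(r) \ge \beta - \frac{r^2}{2n}, \qquad \text{so } V(r) \ge \tfrac{\beta}{2} \text{ on } \bigl[0, \sqrt{n\beta}\,\bigr].
\]
Next I insert this lower bound into the analogous identity $-r^{n-1}U'(r) = \int_0^r t^{n-1} V^p \,dt \ge (\beta/2)^p r^n/n$, integrate from $0$ to $\sqrt{n\beta}$, and use $U(0)=1$ together with the non-negativity of $U$ at the endpoint to conclude
\[
1 \;\ge\; 1 - U\bigl(\sqrt{n\beta}\bigr) \;\ge\; \Bigl(\frac{\beta}{2}\Bigr)^{\!p+1}.
\]
This forces $\beta \le 2$, whence $V_{p_k} \le V_{p_k}(0) \le 2$ on $\R^n$.

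There is no real obstacle here: the key observation is that $U\le 1$ propagates to a quantitative lower bound for $V$ on a ball of radius $\sim\sqrt{\beta}$, and plugging this back into the $U$-equation drains $U$ by at least $(\beta/2)^{p+1}$ near that radius, so the non-negativity of $U$ alone pins down $\beta$ independently of $(p,q)$. The estimate is manifestly far from sharp --- in particular it uses nothing about $p_k$ lying near $1$, and could be run for any $(p,q)$ on the critical hyperbola --- which matches the authors' warning that the bound is ``not sharp but enough for our purpose.''
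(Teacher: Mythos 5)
Your proof is correct, and it takes a genuinely different and more elementary route than the paper's. The paper first bounds the minimization constant $K_{p_k,q_k}$ uniformly by testing \eqref{eq-K_pq} with the explicit bubble $w_{a_n,0}$, which yields $\int_{\R^n}U_{p_k}^{q_k+1}\le C$; it then writes $V_{p_k}(0)$ via the Green's representation formula, splits the integral at $|y|=1$, and applies H\"older's inequality together with that integral bound. Your argument bypasses the variational characterization entirely: the radial ODE identities $-r^{n-1}V'=\int_0^r t^{n-1}U^{q}\,dt$ and $-r^{n-1}U'=\int_0^r t^{n-1}V^{p}\,dt$, combined with $0\le U\le1$ and $U(0)=1$, pin down $V(0)\le 2$ by a two-step comparison (the one point worth making explicit is that $r^{n-1}V'(r)\to0$ as $r\to0$, which follows from $V$ being $C^1$ near the origin since $-\Delta V=U^{q}\le1$). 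What you gain is a cleaner, fully explicit constant that requires neither $\ep_0$ small nor $(p,q)$ near $(1,\frac{n+4}{n-4})$, exactly as you note. What the paper's route buys is the byproduct \eqref{eq-K_pq-2}, i.e.\ the uniform bound on $\int_{\R^n}U_{p_k}^{q_k+1}$ and on $K_{p_k,q_k}$, which is not merely a stepping stone to Lemma \ref{lem-cpt-11} but is reused in an essential way in the proof of Lemma \ref{lem-cpt-12} (the tail estimate \eqref{eq-dec-3} and the energy identity \eqref{eq-conv-3}); so if one adopted your proof, Step 1 of the paper's argument would still have to be carried out separately later.
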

\begin{proof}
We present the proof by dividing it into 2 steps.

\medskip
\noindent \textsc{Step 1: Uniform boundedness of $K_{p_k, q_k}$.}
Using $w_{a_n,0}$ as a test function of the minimization problem \eqref{eq-K_pq}, we obtain
\begin{equation}\label{eq-K_pq-1}
K_{p_k, q_k} \le \frac{\int_{\R^n} |\Delta w_{a_n,0}|^{p_k+1 \over p_k}}{(\int_{\R^n} w_{a_n,0}^{q_k+1})^{\frac{p_k+1}{p_k(q_k+1)}}}.
\end{equation}
Exploiting the explicit form of $w_{a_n,0}$ and applying the dominated convergence theorem on the right-hand side, we easily deduce that $K_{p_k, q_k}$ is uniformly bounded. In particular,
\begin{equation}\label{eq-K_pq-2}
K_{p_k, q_k} = \(\int_{\R^n} U_k^{q_k+1}\)^{1-\frac{p_k+1}{p_k(q_k+1)}} \le C
\quad \text{or } \quad
\int_{\R^n} U_k^{q_k+1} \le C.
\end{equation}
Note that the second inequality holds, since $q_k \to \frac{n+4}{n-4}$ and so $\frac{(p_k+1)}{p_k(q_k+1)} \to \frac{n-4}{n} < 1$ as $k \to \infty$.

\medskip
\noindent \textsc{Step 2: Uniform boundedness of $(U_{p_k}, V_{p_k})$.}
Because $U_{p_k}(0) = 1$ and $U_{p_k}$ is decreasing in $r$, it holds that $\|U_{p_k}\|_{L^{\infty}(\R^n)} \le 1$ for all $k \in \N$.
By \eqref{eq-LEs}, the Green's representation formula, H\"older's inequality and Step 1, we have
\begin{align*}
V_{p_k}(0) &= \ga_n \int_{B_1(0)} \frac{1}{|y|^{n-2}} U_{p_k}^{q_k}(y) dy
+ \ga_n \int_{\R^n \setminus B_1(0)} \frac{1}{|y|^{n-2}} U_{p_k}^{q_k}(y) dy \\
&\le \ga_n \int_{B_1(0)} \frac{1}{|y|^{n-2}} dy + \ga_n \(\int_{\R^n \setminus B_1(0)} \frac{1}{|y|^{(n-2)(q_k+1)}} dy\)^{1 \over q_k+1}
\(\int_{\R^n \setminus B_1(0)} U_{p_k}^{q_k+1}\)^{q_k \over q_k+1} \\
&\le C
\end{align*}
where $\ga_n := (n(n-2)|B_1(0)|)^{-1}$. The last inequality holds because of \eqref{eq-K_pq-2} and the relation that $(n-2) \frac{2n}{n-4} > n$.
As before, since $V_{p_k}$ is decreasing in $r$, we see that $\|V_{p_k}\|_{L^{\infty}(\R^n)} \le C$ for all $k \in \N$.
\end{proof}

\begin{lemma}\label{lem-cpt-12}
Suppose that an arbitrarily small number $\eta_0 > 0$ is given.
Reducing the size of $\ep_0$ if necessary, one can find a constant $C > 0$ depending only on $n$, $\ep_0$ and $\eta_0$ such that
%\begin{equation}\label{eq-cpt-12}
\[U_{p_k}(x) \le \frac{C}{1+|x|^{n-4-\eta_0}} \quad \text{and} \quad V_{p_k}(x) \le \frac{C}{1+|x|^{n-2}}\]
%\end{equation}
for all $x \in \R^n$ and $k \in \N$.
\end{lemma}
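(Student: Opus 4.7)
The plan is to bootstrap pointwise decay for $(U_{p_k},V_{p_k})$ from the Green's representations
\[
U_{p_k}(x)=\ga_n\int_{\R^n}\frac{V_{p_k}^{p_k}(y)}{|x-y|^{n-2}}\,dy,\qquad V_{p_k}(x)=\ga_n\int_{\R^n}\frac{U_{p_k}^{q_k}(y)}{|x-y|^{n-2}}\,dy,
\]
starting from the initial pointwise estimate $U_{p_k}(r)\le C(1+r)^{-n/(q_k+1)}$, which follows from the uniform bound $\int U_{p_k}^{q_k+1}\le C$ of \eqref{eq-K_pq-2}, the $L^\infty$ bound of Lemma \ref{lem-cpt-11}, and the radial monotonicity of $U_{p_k}$.

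The core bootstrap step would go as follows. Given $U_{p_k}(y)\le C(1+|y|)^{-\alpha}$, I split the integral defining $V_{p_k}(x)$ into the three regions $\{|y|\le|x|/2\}$, $\{|x|/2\le|y|\le 2|x|\}$, $\{|y|\ge 2|x|\}$; using the decay of $U_{p_k}$ in the outer regions, radial monotonicity in the annulus, and direct Newtonian-kernel estimates, I expect to obtain
\[
V_{p_k}(x)\le C(1+|x|)^{-(\alpha q_k-2)}+C(1+|x|)^{-(n-2)}
\]
in the slow regime $2<\alpha q_k<n$, and the pure fast-decay bound $V_{p_k}(x)\le C(1+|x|)^{-(n-2)}$ as soon as $\alpha q_k>n$, since in that case $\int U_{p_k}^{q_k}$ is finite. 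Repeating the same splitting in the representation of $U_{p_k}$ produces the next-iterate slow exponent $\alpha'=p_kq_k\alpha-2(p_k+1)$, while once $V_{p_k}$ is in the fast regime, using that $V_{p_k}^{p_k}$ has a non-integrable tail (because $p_k<n/(n-2)$ for $\ep_0$ small) yields the desired slow-decay bound $U_{p_k}(x)\le C(1+|x|)^{-(p_k(n-2)-2)}$. Choosing $\ep_0$ so that $(n-2)\ep_0\le\eta_0$ makes this exponent at least $n-4-\eta_0$.

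The main obstacle is the following algebraic coincidence, peculiar to the critical hyperbola \eqref{eq-hyp}: a short computation shows that the slow-regime bootstrap map $\alpha\mapsto p_kq_k\alpha-2(p_k+1)$ has fixed point $\alpha^{*}=2(p_k+1)/(p_kq_k-1)$, and rewriting $p_kq_k-1=2(p_k+q_k+2)/(n-2)$ together with $(p_k+1)(q_k+1)(n-2)=n(p_k+q_k+2)$ gives $\alpha^{*}=n/(q_k+1)$, which agrees exactly with the initial exponent from radial monotonicity. So a naive iteration is stationary. The way out is that the map is expanding there ($p_kq_k>1$) and that the near-origin piece of the splitting always produces a genuine additive contribution $C(1+|x|)^{-(n-2)}$; leveraging this additive contribution together with the $L^\infty$ bound of Lemma \ref{lem-cpt-11} on a suitable intermediate annulus produces a strict improvement above $\alpha^{*}$ at each step whose iterates grow geometrically. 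After a number of steps depending on $\eta_0$ (and with $\ep_0$ chosen correspondingly small), the running exponent strictly exceeds $n/q_k$ uniformly in $k$, triggering the transition to the fast-decay regime for $V_{p_k}$ and delivering both claimed estimates.
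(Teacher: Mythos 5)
You have correctly identified the central obstruction: on the critical hyperbola the pointwise bootstrap map $\alpha\mapsto p_kq_k\alpha-2(p_k+1)$ has its fixed point exactly at the exponent $n/(q_k+1)$ furnished by the uniform $L^{q_k+1}$ bound \eqref{eq-K_pq-2} and radial monotonicity, so the naive iteration is stationary. The gap lies in your proposed escape from this fixed point. In the slow regime every piece of your three-fold splitting reproduces exactly the fixed-point rate: the near-origin piece contributes $C|x|^{-(n-2)}\int_{|y|\le|x|/2}U_{p_k}^{q_k}$, and since $U_{p_k}^{q_k}$ is \emph{not} integrable at the fixed-point rate this integral grows like $|x|^{\,n-\alpha q_k}$, so this piece \emph{is} the dominant slow term $|x|^{-(\alpha q_k-2)}$ rather than an additive perturbation of it; the genuinely additive $(1+|x|)^{-(n-2)}$ contribution (from a fixed ball) is of strictly lower order and cannot raise the slow exponent; and the $L^\infty$ bound of Lemma \ref{lem-cpt-11} is weaker than the running decay bound on any annulus $\{|x|/2\le|y|\le2|x|\}$ with $|x|$ large, so it yields nothing new there. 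Thus ``a strict improvement above $\alpha^*$ at each step'' is asserted but not obtained, and I do not see how it could be obtained from the ingredients you allow yourself (the $L^\infty$ bound, the global $L^{q_k+1}$ bound, monotonicity, and the Green representations), since these are essentially compatible with the scale-invariant decay rate and nothing faster.

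The missing ingredient --- and the heart of the paper's proof --- is a uniform tail-smallness statement: for every $\zeta>0$ there is an $R$, independent of $k$, with $\int_{\R^n\setminus B_R(0)}U_{p_k}^{q_k+1}\le\zeta$ (this is \eqref{eq-dec-3}). The paper derives it not from potential theory but from the variational characterization: $(U_{p_k},V_{p_k})\to(w_{a_n,0},-\Delta w_{a_n,0})$ in $C^2_{\text{loc}}$, and the minimization quotients $K_{p_k,q_k}$ converge to the optimal biharmonic Sobolev constant, forcing all inequalities in \eqref{eq-conv-3} to be equalities and hence preventing any fraction of the $L^{q_k+1}$ mass from escaping to infinity. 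With this in hand, the self-referential term in the Green's/Hardy--Littlewood--Sobolev/H\"older chain \eqref{eq-V_po-2} (the analogue of your slow term) carries a coefficient $C\zeta$ and can be absorbed into the left-hand side; this is precisely how the paper breaks out of the fixed point, after which it concludes with radial monotonicity and two applications of the Newtonian potential, just as you do at the end. Your scheme could be repaired by inserting this tail estimate (splitting $U_{p_k}$ into $\chi_{B_R(0)}U_{p_k}$ and $\chi_{\R^n\setminus B_R(0)}U_{p_k}$ as the paper does), but as written the key step fails.
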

\begin{proof}
The arguments used here is motivated by the ones in \cite[Section 4]{CK}.
Our proof is relatively simple since we make use of good qualitative properties of the ground states $(U_{p_k}, V_{p_k})$.
We present the proof by dividing it into 2 steps.

\medskip
\noindent \textsc{Step 1: Tail estimate for $U_{p_k}$.}
For any fixed number $R > 0$, we define the functions
\[U_{p_ki} = \chi_{B_R(0)} U_{p_k} \quad \text{and} \quad U_{p_ko} = \chi_{\R^n \setminus B_R(0)} U_{p_k} \quad \text{in } \R^n.\]
We assert that for any given number $\zeta > 0$, there exists a number $R > 0$ depending only on $n$, $\ep_0$ and $\zeta$ such that
\begin{equation}\label{eq-dec-3}
\int_{\R^n \setminus B_R(0)} U_{p_k}^{q_k+1} = \int_{\R^n} U_{p_ko}^{q_k+1} \le \zeta
\end{equation}
for all $k \in \N$.

By Lemma \ref{lem-cpt-11} and elliptic regularity, there exists a pair $(\wtu_1, \wtv_1) \in (C^2(\R^n))^2$ of nonnegative radial functions such that
\begin{equation}\label{eq-conv}
(U_{p_k}, V_{p_k}) \to (\wtu_1, \wtv_1) \quad \text{in } (C^2_{\text{loc}}(\R^n))^2 \quad \text{as } k \to \infty
\end{equation}
along a subsequence. In particular, $(\wtu_1, \wtv_1)$ is a classical solution of \eqref{eq-LEs} with $(p,q) = (1,\frac{n+4}{n-4})$.
Also, since $\wtu_1$ is superharmonic and $\wtu_1(0) = 1$, the maximum principle implies that $\wtu_1 > 0$ in $\R^n$.
In view of Proposition \ref{prop-lin-bi} (1), it holds that $\wtu_1 = w_{a_n,0}$ in $\R^n$ and the convergence in \eqref{eq-conv} is valid for the entire sequence (not just for a subsequence). Summing up,
\begin{equation}\label{eq-conv-2}
(U_{p_k}, V_{p_k}) \to (U_1, V_1) \quad \text{in } (C^2_{\text{loc}}(\R^n))^2 \quad \text{as } k \to \infty
\end{equation}
where we write $(U_1, V_1) = (w_{a_n,0}, -\Delta w_{a_n,0})$ as in the statement of Proposition \ref{prop-cpt}.

Taking the limit $k \to \infty$ on the both sides of \eqref{eq-K_pq-1}, and employing Fatou's lemma, \eqref{eq-K_pq-2} and \eqref{eq-conv-2}, we obtain
\begin{equation}\label{eq-conv-3}
\begin{aligned}
\int_{\R^n} w_{a_n,0}^{\frac{2n}{n-4}} &\le \liminf_{k \to \infty} \int_{\R^n} U_{p_k}^{q_k+1} \le \limsup_{k \to \infty} \int_{\R^n} U_{p_k}^{q_k+1} \\
&= \limsup_{k \to \infty} K_{p_k, q_k}^{\(1-\frac{p_k+1}{p_k(q_k+1)}\)^{-1}}
\le \left[ \frac{\int_{\R^n} |\Delta w_{a_n,0}|^2}{(\int_{\R^n} w_{a_n,0}^{2n \over n-4})^{n-4 \over n}} \right]^{n \over 4} = \int_{\R^n} w_{a_n,0}^{\frac{2n}{n-4}}.
\end{aligned}
\end{equation}
Therefore, all the inequalities must be the equalities. Consequently, applying \eqref{eq-conv-2} and \eqref{eq-conv-3}, we can select $R > 0$ so large that
\[\int_{\R^n} U_{p_ko}^{q_k+1} = \int_{\R^n} U_{p_k}^{q_k+1} - \int_{\R^n} U_{p_ki}^{q_k+1} \to \int_{\R^n} w_{a_n,0}^{\frac{2n}{n-4}}
- \int_{B_R(0)} w_{a_n,0}^{\frac{2n}{n-4}} \le \frac{\zeta}{2} \quad \text{as } k \to \infty.\]
This proves the assertion \eqref{eq-dec-3}.

\medskip
\noindent \textsc{Step 2: Completion of the proof.}
By Green's representation formula, it holds that
\begin{equation}\label{eq-V_po}
\begin{aligned}
\|V_{p_k}\|_{L^{a_1}(\R^n)} &= \ga_n \left\| |\cdot|^{-(n-2)} \ast U_{p_k}^{q_k} \right\|_{L^{a_1}(\R^n)} \\
&\le \ga_n \(\left\| |\cdot|^{-(n-2)} \ast U_{p_ko}^{q_k} \right\|_{L^{a_1}(\R^n)}
+ \left\| |\cdot|^{-(n-2)} \ast U_{p_ki}^{q_k} \right\|_{L^{a_1}(\R^n)}\)
\end{aligned}
\end{equation}
provided that the rightmost side is finite.
Its finiteness is guaranteed for $a_1 > \frac{n}{n-2}$, since \eqref{eq-dec} and Lemma \ref{lem-cpt-11} imply
\begin{equation}\label{eq-dec-2}
(|\cdot|^{-(n-2)} \ast U_{p_ko}^{q_k})(x) \le \frac{C\alpha_{p_k}}{1+|x|^{n-2}}
\quad \text{and} \quad
(|\cdot|^{-(n-2)} \ast U_{p_ki}^{q_k})(x) \le \frac{C}{1+|x|^{n-2}}
\end{equation}
for all $x \in \R^n$ and some constant $C > 0$ depending only on $n$, $\ep_0$ and $R$.\footnote{The inequalities in \eqref{eq-dec-2} are well-known and can be proved as in the proof of \cite[Lemma B.2]{WY}.
We note that the right-hand side of the first inequality in \eqref{eq-dec-2} depends on $k \in \N$, while that of the second one does not.}

On the other hand, the Hardy-Littlewood-Sobolev inequality, H\"older's inequality and \eqref{eq-dec-3} yield
\begin{equation}\label{eq-V_po-2}
\begin{aligned}
\left\| |\cdot|^{-(n-2)} \ast U_{p_ko}^{q_k} \right\|_{L^{a_1}(\R^n)} &\le \|U_{p_ko}^{q_k}\|_{L^{a_2}(\R^n)}
\le \Big\|U_{p_ko}^{p_kq_k-1 \over p_k}\Big\|_{L^{p_k(q_k+1) \over p_kq_k-1}(\R^n)} \Big\|U_{p_ko}^{1 \over p_k}\Big\|_{L^{a_3}(\R^n)} \\
&\le \|U_{p_ko}\|^{p_kq_k-1 \over p_k}_{L^{q_k+1}(\R^n)} \|U_{p_k}\|^{1 \over p_k}_{L^{a_3 \over p_k}(\R^n)}
 \\
&\le C\zeta \||\cdot|^{-(n-2)} \ast V_{p_k}^{p_k}\|^{1 \over p_k}_{L^{a_3 \over p_k}(\R^n)} \\
&\le C\zeta \|V_{p_k}^{p_k}\|^{1 \over p_k}_{L^{a_4}(\R^n)} = C\zeta \|V_{p_k}\|_{L^{a_4p_k}(\R^n)}
\end{aligned}
\end{equation}
where
\[\frac{1}{a_2} = \frac{1}{a_1} + \frac{2}{n}, \quad \frac{1}{a_3} + \frac{p_kq_k-1}{p_k(q_k+1)} = \frac{1}{a_2}, \quad \frac{1}{a_4} = \frac{p_k}{a_3} + \frac{2}{n}\]
and $\zeta > 0$ is an arbitrarily small number.
If we take $\ep_0 > 0$ sufficiently small, then
\[\min\left\{a_2,\, \frac{p_k(q_k+1)}{p_kq_k-1},\, a_3,\, \frac{a_3}{p_k},\, a_4\right\} > 1.\]
Furthermore, we infer from \eqref{eq-hyp} and \eqref{eq-dec} that $a_1 = a_4p_k$ and all the quantities in \eqref{eq-V_po-2} are finite.

Plugging \eqref{eq-V_po-2} into \eqref{eq-V_po} and choosing any $\eta_0' > 0$ small, we find a constant $C > 0$ depending only on $n$, $\ep_0$, $R$ and $\eta_0'$ such that
\[\|V_{p_k}\|_{L^{{n \over n-2} + \eta_0'}(\R^n)} \le C.\]
From the radial symmetry and the decay property of $V_{p_k}$, we deduce
\[V_{p_k}^{{n \over n-2} + \eta_0'}(r)\, r^n \le C\int_{B_r(0)} V_{p_k}^{{n \over n-2} + \eta_0'} \le C\]
where $r = |x| \ge 1$. By combining this with \eqref{eq-cpt-11}, we see that
\[V_{p_k}(x) \le \frac{C}{1+|x|^{n-2-\eta_0'}}\]
for all $x \in \R^n$ and $k \in \N$. As a consequence, we obtain
\[U_{p_k}(x) = \ga_n (|\cdot|^{-(n-2)} \ast V_{p_k}^{p_k})(x) \le C \(|\cdot|^{-(n-2)} \ast \frac{1}{1+|\cdot|^{n-2-\eta_0}}\)(x)
\le \frac{C}{1+|x|^{n-4-\eta_0}},\]
and so
\[V_{p_k}(x) = \ga_n (|\cdot|^{-(n-2)} \ast U_{p_k}^{q_k})(x) \le C \(|\cdot|^{-(n-2)} \ast \frac{1}{1+|\cdot|^{n+4-\eta_0''}}\)(x)
\le \frac{C}{1+|x|^{n-2}}\]
for all $x \in \R^n$ for small $\eta_0,\, \eta_0'' > 0$. This completes the proof.
\end{proof}

\begin{proof}[Completion of the proof of Proposition \ref{prop-cpt}]
By Lemma \ref{lem-cpt-12}, there exists a constant $C > 0$ depending only on $n$ and $\ep_0$ such that
\[\|U_{p_k}\|_{L^{\frac{2n}{n+4} \cdot q_k}(\R^n)} + \|V_{p_k}\|_{L^{2p_k}(\R^n)} \le C.\]
This together with \eqref{eq-LEs} implies uniform boundedness of the sequence $\{(U_{p_k}, V_{p_k})\}_{k=1}^{\infty}$ in the space $\mcd^{2,2}_0(\R^n) \times \mcd^{2,\frac{2n}{n+4}}_0(\R^n)$.

In addition, the uniform decay estimate of $\{(U_{p_k}, V_{p_k})\}_{k=1}^{\infty}$ presented in Lemma \ref{lem-cpt-12} leads
\[\|\Delta U_{p_k}\|_{L^2(\R^n)} = \|V_{p_k}\|_{L^{2p_k}(\R^n)}^{p_k} \to \|V_1\|_{L^2(\R^n)} = \|\Delta U_1\|_{L^2(\R^n)}\]
and
\[\|\Delta V_{p_k}\|_{L^{2n \over n+4}(\R^n)} = \|U_{p_k}\|_{L^{{2n \over n+4} \cdot q_k}(\R^n)}^{q_k} \to \|U_1\|_{L^{2n \over n-4}(\R^n)}^{n+4 \over n-4} = \|\Delta V_1\|_{L^{2n \over n+4}(\R^n)}\]
as $k \to \infty$. As a result, we can invoke \eqref{eq-conv-2} to conclude that
\[(U_{p_k}, V_{p_k}) \to (U_1, V_1) \quad \text{in } \mcd^{2,2}_0(\R^n) \times \mcd^{2,{2n \over n+4}}_0(\R^n) \quad \text{as } k \to \infty,\]
finishing the proof.
\end{proof}

We end this subsection, providing two estimates which will used later.
\begin{lemma}\label{lem-cpt-13}
Suppose that an arbitrarily small number $\eta_2 > 0$ is given.
Reducing the size of $\ep_0$ if necessary, one can find a constant $C > 0$ depending only on $n$, $\ep_0$ and $\eta_2$ such that
\[U_{p_k}(x) \ge \frac{C}{1+|x|^{n-4+\eta_2}} \quad \text{and} \quad V_{p_k}(x) \ge \frac{C}{1+|x|^{n-2}}\]
for all $x \in \R^n$ and $k \in \N$.
\end{lemma}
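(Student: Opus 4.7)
The plan is to derive both lower bounds from the Green's representation formulas for $U_{p_k}$ and $V_{p_k}$, by exploiting uniform-in-$k$ \emph{positivity on a fixed ball}, which plays the role analogous to uniform-in-$k$ \emph{decay at infinity} used in Lemma~\ref{lem-cpt-12}. The central ingredient is the $C^2_{\mathrm{loc}}$ convergence \eqref{eq-conv-2} of $(U_{p_k},V_{p_k})$ to the strictly positive pair $(U_1,V_1) = (w_{a_n,0}, -\Delta w_{a_n,0})$.

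First I would establish that there exist $c>0$ and $k_0 \in \N$ such that $U_{p_k}, V_{p_k} \ge c$ on $B_2(0)$ for all $k \ge k_0$; this is immediate from \eqref{eq-conv-2} and the strict positivity of $(U_1, V_1)$. For the finitely many remaining indices, the strong maximum principle applied to each superharmonic ground state provides a strictly positive minimum on $B_2(0)$, so after shrinking $c$ the bound is valid for every $k \in \N$. Green's formula for $-\Delta V_{p_k} = U_{p_k}^{q_k}$ then gives, for $|x| \ge 2$,
\[
V_{p_k}(x) = \ga_n \int_{\R^n} \frac{U_{p_k}^{q_k}(y)}{|x-y|^{n-2}}\, dy \;\ge\; \ga_n\, c^{q_k} \int_{B_1(0)} \frac{dy}{|x-y|^{n-2}} \;\ge\; \frac{C}{|x|^{n-2}},
\]
while for $|x| \le 2$ the $C^2_{\mathrm{loc}}$ convergence of $V_{p_k}$ to $V_1>0$ gives a uniform positive lower bound. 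Putting these together yields $V_{p_k}(x) \ge C/(1+|x|^{n-2})$.

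Feeding this estimate into $-\Delta U_{p_k} = V_{p_k}^{p_k}$ via Green's formula produces, for $|x| \ge 2$,
\[
U_{p_k}(x) \;\ge\; C \int_{|y| \le |x|/2} \frac{dy}{|x-y|^{n-2} (1+|y|)^{(n-2)p_k}} \;\ge\; \frac{C}{|x|^{n-2}} \int_{|y| \le |x|/2} \frac{dy}{(1+|y|)^{(n-2)p_k}}.
\]
Since $p_k$ stays in a compact subset of $(2/(n-2), n/(n-2))$, the exponent $(n-2)p_k \in (2,n)$ remains strictly below $n$, so the remaining integral is bounded below by $c\, |x|^{n-(n-2)p_k}$ for $|x|\ge 2$. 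Hence
\[
U_{p_k}(x) \;\ge\; C\,|x|^{2-(n-2)p_k} \;=\; \frac{C}{|x|^{(n-4) + (n-2)(p_k-1)}},
\]
and choosing $\ep_0$ small enough that $(n-2)\ep_0 \le \eta_2$ gives $U_{p_k}(x) \ge C/|x|^{(n-4)+\eta_2}$ on $|x|\ge 2$. For $|x|\le 2$, $C^2_{\mathrm{loc}}$ convergence of $U_{p_k}$ to $U_1>0$ again supplies the lower bound, completing the proof.

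The only subtle point is ensuring \emph{uniform-in-$k$} constants throughout: the positivity on $B_2(0)$ must hold with a single $c$ across the whole sequence (handled by combining \eqref{eq-conv-2} for $k \ge k_0$ with the strong maximum principle for the remaining $k$), and the exponent loss $(n-2)|p_k-1|$ must be absorbable into the allotted slack $\eta_2$ (handled by shrinking $\ep_0$). There is no serious obstacle; structurally this is the companion of Lemma~\ref{lem-cpt-12}, with the roles of the origin and infinity exchanged.
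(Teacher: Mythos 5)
Your proposal is correct and follows essentially the same route as the paper: a uniform positive lower bound on a fixed ball obtained from the $C^2_{\mathrm{loc}}$ convergence \eqref{eq-conv-2}, fed twice through the Green's representation, with the loss $(n-2)(p_k-1)$ in the exponent for $U_{p_k}$ absorbed into $\eta_2$ by shrinking $\ep_0$. The only cosmetic difference is that the paper integrates over $B_{|x|/2}(x)$ rather than $B_{|x|/2}(0)$ in the step for $U_{p_k}$, which yields the same estimate.
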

\begin{proof}
According to \eqref{eq-conv-2}, there exists $C > 0$ depending only on $n$ such that
\[U_{p_k}^{q_k}(x) \ge C \quad \text{for all } x \in B_1(0).\]
Applying the argument in the proof of \cite[Proposition 2]{Vi}, we obtain
\[V_{p_k}(x) \ge \int_{B_1(0)} \frac{\ga_n}{|x-y|^{n-2}} U_{p_k}^{q_k}(y) dy
\ge \frac{C}{1+|x|^{n-2}} \int_{B_1(0)} U_{p_k}^{q_k}(y) dy \ge \frac{C}{1+|x|^{n-2}}\]
and
\[U_{p_k}(x) \ge \int_{B_{\frac{|x|}{2}}(x)} \frac{\ga_n }{|x-y|^{n-2}} V_{p_k}^{p_k}(y) dy \ge \frac{C}{|x|^{n-2}} \int_{B_{\frac{|x|}{2}}(x)} \frac{1}{1+|y|^{n-2+\eta_2}} dy \ge \frac{C}{1+|x|^{n-4+\eta_2}}\]
for $x \in \R^n \setminus B_1(0)$.
\end{proof}

\begin{cor}\label{cor-cpt-12}
Suppose that an arbitrarily small number $\eta_1 > 0$ is given.
Reducing the size of $\ep_0$ if necessary, one can find a constant $C > 0$ depending only on $n$, $\ep_0$ and $\eta_1$ such that
\[|\nabla^l U_{p_k}(x)| \le \frac{C}{1+|x|^{n-4+l-\eta_1}} \quad \text{and} \quad |\nabla^l V_{p_k}(x)| \le \frac{C}{1+|x|^{n-2+l}}\]
for all $x \in \R^n$, $k \in \N$ and $l = 1, 2, 3$.
\end{cor}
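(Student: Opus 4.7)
\emph{Plan.} The plan is to reduce the derivative estimates to uniform $C^3$-bounds on rescaled functions via interior elliptic regularity. First, for $|x|\le 1$, local uniform $C^3$-boundedness of $\{(U_{p_k},V_{p_k})\}$ follows by iterating Schauder estimates: Lemma \ref{lem-cpt-11} gives uniform $L^\infty_{\mathrm{loc}}$ bounds, while Lemma \ref{lem-cpt-13} ensures $V_{p_k}$ stays bounded below by a positive constant on compact sets, so that the nonlinearity $V_{p_k}^{p_k}$ enjoys uniform $C^l$-estimates in $k$; the same is true for $U_{p_k}^{q_k}$ once one knows $U_{p_k}$ is $C^{2,\alpha}$ there.

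For $|x|=R\ge 1$ I would introduce the rescaled functions
\[
\tilde U(z) := R^{n-4-\eta_1}\, U_{p_k}(x+Rz), \qquad \tilde V(z) := R^{n-2}\, V_{p_k}(x+Rz), \qquad z\in B_{1/2}(0).
\]
Lemmas \ref{lem-cpt-12} and \ref{lem-cpt-13} imply that $\tilde V$ takes values in a fixed compact subinterval of $(0,\infty)$ and $\tilde U$ is uniformly bounded above, both uniformly in $k$ and $R$. The rescaled system reads
\[
-\Delta_z \tilde U(z) = R^{(n-2)(1-p_k)-\eta_1}\, \tilde V^{p_k}(z), \qquad -\Delta_z \tilde V(z) = R^{n-(n-4-\eta_1)q_k}\, \tilde U^{q_k}(z),
\]
where $q_k$ is the conjugate exponent determined by \eqref{eq-hyp}. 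Upon further reducing $\ep_0$ in terms of $\eta_1$, both exponents of $R$ become non-positive (for the first, because $(n-2)|1-p_k|$ can be made smaller than $\eta_1$; for the second, because $n-(n-4)q_k=-4$ at the limit $p_k=1$). Hence both right-hand sides are bounded in $L^\infty(B_{1/2}(0))$ uniformly in $k$.

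Next I would iterate interior Schauder estimates to produce uniform $C^{3,\alpha'}$-bounds on $\tilde U$ and $\tilde V$ on $B_{1/4}(0)$, and then undo the scaling: since $\nabla^l_z\tilde U(0)=R^{n-4-\eta_1+l}\,\nabla^l U_{p_k}(x)$ and $\nabla^l_z\tilde V(0)=R^{n-2+l}\,\nabla^l V_{p_k}(x)$, this immediately gives the claimed pointwise decay of $\nabla^l U_{p_k}$ and $\nabla^l V_{p_k}$ for $l=1,2,3$.

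The main obstacle is running the Schauder bootstrap all the way to the third derivative despite the non-integer exponents $p_k,q_k$. For $\tilde V^{p_k}$ there is no difficulty, as $\tilde V$ stays in an interval bounded away from $0$ on which $t\mapsto t^{p_k}$ is $C^\infty$ uniformly in $k$. The delicate point is $\tilde U^{q_k}$: unlike $\tilde V$, the function $\tilde U$ is only bounded above (its lower bound degenerates as $R\to\infty$), so the smoothness of $t\mapsto t^{q_k}$ at the origin matters. This is navigated by exploiting that $q_k$ is close to $(n+4)/(n-4)>1$: the map is locally Lipschitz on $[0,\infty)$, and its derivative $q_k t^{q_k-1}$ is at least $\min(1,q_k-1)$-H\"older on $[0,\infty)$, which combined with the $C^{2,\alpha}$-regularity of $\tilde U$ from the first Schauder iteration forces $\tilde U^{q_k}\in C^{1,\alpha'}$ for some $\alpha'>0$, and hence $\tilde V\in C^{3,\alpha'}$ on $B_{1/4}(0)$.
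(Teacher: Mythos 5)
Your argument is correct and is precisely the ``standard rescaling argument based on elliptic regularity'' that the paper invokes without detail: you rescale at scale $R=|x|$ using the two-sided pointwise bounds of Lemmas \ref{lem-cpt-12} and \ref{lem-cpt-13}, check that the rescaled right-hand sides stay bounded after shrinking $\ep_0$, and bootstrap to uniform $C^{3,\alpha}$ interior bounds before undoing the scaling. Your extra care with the H\"older regularity of $t\mapsto t^{q_k}$ near $t=0$ (relevant since the lower bound on the rescaled $U$ degenerates, and $q_k<2$ in high dimensions) is a legitimate point the paper glosses over, and you handle it correctly.
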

\begin{proof}
It immediately follows from Lemmas \ref{lem-cpt-12} and \ref{lem-cpt-13}, and the standard rescaling argument based on elliptic regularity.
% To obtain the estimates for $l = 2, 3$, we consider the equations of the first and the second partial derivatives of $(U_{p_k}, V_{p_k})$'s.
\end{proof}

\subsection{Non-degeneracy results near $p = 1$}
Employing the compactness result and pointwise estimates of the sequence $\{(U_{p_k},V_{p_k})\}_{k=1}^{\infty}$ of the unique positive ground states of \eqref{eq-LEs} derived in the previous subsection,
we first deduce a non-degeneracy result of equation \eqref{eq-LEsc} for $p$ near $1$.
\begin{thm}\label{thm-deg}
There exists a small number $\ep_1 \in (0,\ep_0]$ such that if $|p-1| \le \ep_1$ and $U_p$ is the unique positive ground state of \eqref{eq-LEsc} with $U_p(0) = 1$,
then the solution space of the linear equation
\begin{equation}\label{eq-lin}
(-\Delta) \((-\Delta U_p)^{{1 \over p}-1} (-\Delta \phi) \) = pq U_p^{q-1} \phi \quad \text{in } \R^n, \quad \phi \in \mcd^{2,2}_0(\R^n)
\end{equation}
is spanned by
\begin{equation}\label{eq-lin-sol}
\frac{\pa U_p}{\pa x_1},\, \cdots,\, \frac{\pa U_p}{\pa x_n} \quad \text{and} \quad x \cdot \nabla U_p + \(n-2 - \frac{n}{p+1}\) U_p.
\end{equation}
\end{thm}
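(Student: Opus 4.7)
My plan is a standard perturbation/compactness argument. First, I would observe that, since $-\Delta U_p = V_p^p > 0$, the linearization in \eqref{eq-lin} can be rewritten as the weighted biharmonic-type equation
\[(-\Delta)\bigl(V_p^{1-p}(-\Delta \phi)\bigr) = pq\, U_p^{q-1}\phi \quad \text{in } \R^n,\]
which at $p = 1$ collapses exactly to \eqref{eq-lin-bi}. That the $(n+1)$ explicit functions in \eqref{eq-lin-sol} solve \eqref{eq-lin} follows by differentiating \eqref{eq-LEsc} in $\delta$ at $\delta = 1$ and in $\xi$ at $\xi = 0$.

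Next, I would argue by contradiction: suppose there exist $p_k \to 1$ and $\phi_k \in \mcd^{2,2}_0(\R^n)$ solving \eqref{eq-lin} with $p = p_k$, $\|\phi_k\|_{\mcd^{2,2}_0}=1$, and $\mcd^{2,2}_0$-orthogonal to the $(n+1)$ solutions \eqref{eq-lin-sol} at $p_k$. Along a subsequence, $\phi_k \rightharpoonup \phi_\infty$ weakly in $\mcd^{2,2}_0(\R^n)$. Testing against $\psi \in C^\infty_c(\R^n)$ and invoking the two-sided uniform bounds on $V_{p_k}$ from Lemmas \ref{lem-cpt-12}, \ref{lem-cpt-13} (which force $V_{p_k}^{1-p_k} \to 1$ uniformly on $\mathrm{supp}\,\psi$) together with the locally uniform convergence $U_{p_k}^{q_k-1} \to U_1^{8/(n-4)}$ from \eqref{eq-conv-2}, I would pass to the limit and conclude that $\phi_\infty$ solves \eqref{eq-lin-bi}. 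By Proposition \ref{prop-lin-bi}(2), $\phi_\infty$ lies in the $(n+1)$-dimensional span of the $\partial_i U_1$ and $x\cdot \nabla U_1 + \tfrac{n-4}{2} U_1$. Passing the orthogonality conditions on $\phi_k$ to the limit (using Corollary \ref{cor-cpt-12} to justify $L^2$-convergence of second derivatives of the explicit solutions at $p_k$ to those at $p=1$) then forces $\phi_\infty = 0$.

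Finally, to contradict $\|\phi_k\|_{\mcd^{2,2}_0} = 1$, I would upgrade the weak convergence $\phi_k \rightharpoonup 0$ to strong convergence by testing \eqref{eq-lin} against $\phi_k$ itself:
\[\int_{\R^n} V_{p_k}^{1-p_k}|\Delta \phi_k|^2\,dx = p_k q_k \int_{\R^n} U_{p_k}^{q_k-1}\phi_k^2\,dx.\]
For the right-hand side, Corollary \ref{cor-cpt-12} gives $U_{p_k}^{q_k-1}(x) \le C(1+|x|)^{-(8-O(\eta))}$ uniformly in $k$, and a standard H\"older-Sobolev argument with exponent $2n/(n-4)$ together with tightness from the uniform decay shows $\int U_{p_k}^{q_k-1}\phi_k^2 \to 0$. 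The hard part will be the left-hand side: the weight $V_{p_k}^{1-p_k}$ is uniformly close to $1$ only on compact sets, since $V_{p_k}(x) \sim |x|^{-(n-2)}$ at infinity makes $V_{p_k}^{1-p_k}$ behave like $|x|^{\pm(n-2)(p_k-1)}$ there and thus not $L^\infty$-close to $1$ globally. To handle this I would use the two-sided pointwise bounds from Lemmas \ref{lem-cpt-12}, \ref{lem-cpt-13} (which give $|\log V_{p_k}(x)| \le C(1+\log(1+|x|))$ uniformly in $k$) together with weighted decay estimates on $\phi_k$ bootstrapped from \eqref{eq-lin}, so as to show $\int (V_{p_k}^{1-p_k}-1)|\Delta \phi_k|^2\,dx = o(1)$. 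Combining the two sides yields $1 = \|\phi_k\|_{\mcd^{2,2}_0}^2 = o(1)$, the desired contradiction.
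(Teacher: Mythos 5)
Your proposal follows essentially the same route as the paper: argue by contradiction, normalize $\|\Delta\phi_k\|_{L^2}=1$ and impose orthogonality, pass to the weak limit to land on the non-degenerate fourth-order equation \eqref{eq-lin-bi}, and then exploit the quadratic-form identity obtained by testing \eqref{eq-lin} with $\phi_k$ itself. You run the endgame in the opposite order (first forcing $\phi_\infty=0$ from the orthogonality conditions and then deriving $1=o(1)$, whereas the paper first shows $\int_{\R^n}U_1^{8/(n-4)}\phi_\infty^2=\frac{n-4}{n+4}\neq 0$ and then contradicts the orthogonality), but this is the same argument. The one step you defer --- the ``weighted decay estimates on $\phi_k$ bootstrapped from \eqref{eq-lin}'' needed to show $\int_{\R^n}\bigl(V_{p_k}^{1-p_k}-1\bigr)(\Delta\phi_k)^2=o(1)$ and to get tightness on the right-hand side --- is in fact the technical core of the paper's proof. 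There, Steps 1--3 establish the uniform bounds $|\phi_k(x)|\le C(1+|x|)^{-(n-4-\eta)}$ and $|\Delta\phi_k(x)|\le C(1+|x|)^{-(n-2-\eta)}$ by (i) a Calder\'on--Zygmund iteration on the equivalent system for $(\phi_k,\psi_k)$ with $\psi_k=-p_k^{-1}V_{p_k}^{1-p_k}\Delta\phi_k$, (ii) a rescaling argument giving the rough rates $(n-4)/2$ and $n/2$, and (iii) an iterated comparison-function/maximum-principle argument upgrading them; only then does the estimate $|V_{p_k}^{1-p_k}-1|\le C|p_k-1|\log(2+|x|)$ from the two-sided bounds of Lemmas \ref{lem-cpt-12} and \ref{lem-cpt-13} close the argument. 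So your outline is correct and correctly locates the difficulty, but be aware that what you describe in one clause is the bulk of the proof rather than a routine bootstrap.
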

\begin{proof}
Thanks to Lemma \ref{lem-cpt-12}, Corollary \ref{cor-cpt-12} and elliptic regularity,
the functions listed in \eqref{eq-lin-sol} belong to the space $\mcd^{2,2}_0(\R^n) \cap C^{\infty}(\R^n)$.

For $j = 1, \cdots, n$, each $\frac{\pa U_p}{\pa x_j}$ clearly solve \eqref{eq-lin}. Also, if we set
%\begin{equation}\label{eq-U_pmu}
\[U_{p,\delta}(x) = \delta^{2(p+1) \over pq-1} U_p(\delta x) \quad \text{in } \R^n\]
%\end{equation}
for each $\delta > 0$, every $U_{p,\delta}$ is a solution of \eqref{eq-LEsc}. Therefore
\begin{equation}\label{eq-U_pmu-2}
\left. \frac{\pa U_{p,\delta}}{d\delta} \right|_{\delta=1} = x \cdot \nabla U_p + \(n-2 - \frac{n}{p+1}\) U_p,
\end{equation}
whose equality holds due to \eqref{eq-hyp}, solves \eqref{eq-lin}.

\medskip
Suppose that there exist
\begin{itemize}
\item[-] a sequence $\{p_k\}_{k=1}^{\infty} \subset [1-\ep_0, 1+\ep_0]$ of numbers tending to $1$ as $k \to \infty$;
\item[-] a sequence $\{U_{p_k}\}_{k=1}^{\infty}$ of the unique positive ground states of \eqref{eq-LEsc} with $p = p_k$ such that $U_{p_k}(0) = 1$;
\item[-] a sequence $\{\phi_k\}_{k=1}^{\infty} \subset \mcd^{2,2}_0(\R^n)$ of solutions of \eqref{eq-lin} with $p = p_k$ and $u = U_{p_k}$
    which cannot be written as a linear combination of the functions
\[\qquad Z_{1p_k} = \frac{\pa U_{p_k}}{\pa x_1},\, \cdots,\, Z_{np_k} = \frac{\pa U_{p_k}}{\pa x_1} \quad \text{and} \quad Z_{0p_k} = x \cdot \nabla U_{p_k} + \(n-2 - \frac{n}{p_k+1}\) U_{p_k}.\]
\end{itemize}
We may assume further that $\|\Delta \phi_k\|_{L^2(\R^n)} = 1$ and
\begin{equation}\label{eq-lin-0}
\int_{\R^n} \Delta \phi_k\, \Delta Z_{0p_k} = \int_{\R^n} \Delta \phi_k\, \Delta Z_{1p_k} = \cdots = \int_{\R^n} \Delta \phi_k\, \Delta Z_{np_k} = 0.
\end{equation}

The rest of the proof is split into 4 steps.

\medskip
\noindent \textsc{Step 1: Uniform boundedness of $\phi_k$'s and $\Delta \phi_k$'s.}
We claim that there exists a constant $C > 0$ depending only on $n$ and $\ep_0$ such that
\begin{equation}\label{eq-phi}
\|\phi_k\|_{L^{\infty}(\R^n)} + \|\Delta \phi_k\|_{L^{\infty}(\R^n)} \le C
\end{equation}
for all $k \in \N$.

Define
\begin{equation}\label{eq-psi_k}
\psi_k = \frac{1}{p_k} (-\Delta U_{p_k})^{{1 \over p_k}-1} (-\Delta \phi_k) = - \frac{1}{p_k} V_{p_k}^{1-p_k} \Delta \phi_k \quad \text{in } \R^n
\end{equation}
for each $k \in \N$. Then \eqref{eq-lin} is rewritten as the linearized equation of system \eqref{eq-LEs}
\begin{equation}\label{eq-lins}
\begin{cases}
-\Delta \phi_k = p_k V_{p_k}^{p_k-1} \psi_k &\text{in } \R^n,\\
-\Delta \psi_k = q_k U_{p_k}^{q_k-1} \phi_k &\text{in } \R^n.
\end{cases}
\end{equation}
Fix any $x_0 \in \R^n$ such that $|x_0| \ge 2$ and set $R = |x_0|$. For any $0 < r' < r \le 1$ and $l \in \N \cap \{0\}$ such that $n > 4l$, it holds that
\begin{equation}\label{eq-CZ-1}
\|\psi_k\|_{W^{2,{2n \over n-4l}}(B_{r'}(x_0))} \le C\(\|\psi_k\|_{L^{2n \over n-4l}(B_r(x_0))} + \|U_{p_k}^{q_k-1} \phi_k\|_{L^{2n \over n-4l}(B_r(x_0))} \)
\end{equation}
and
\begin{equation}\label{eq-CZ-2}
\|\phi_k\|_{W^{2,{2n \over n-4l}}(B_{r'}(x_0))} \le C\(\|\phi_k\|_{L^{2n \over n-4l}(B_r(x_0))} + \|V_{p_k}^{p_k-1} \psi_k\|_{L^{2n \over n-4l}(B_r(x_0))}\)
\end{equation}
provided that the right-hand sides are finite.

By \eqref{eq-psi_k}, Lemmas \ref{lem-cpt-12} and \ref{lem-cpt-13}, $\|\Delta \phi_k\|_{L^2(\R^n)} = 1$ and the Sobolev inequality, we find
\begin{align*}
\|\psi_k\|_{L^2(B_1(x_0))} &\le C \|V_{p_k}^{1-p_k} \Delta \phi_k\|_{L^2(B_1(x_0))} \\
&\le CR^{(n-2)(p_k-1)} \|\Delta \phi_k\|_{L^2(B_1(x_0))} \le CR^{(n-2)(p_k-1)}
\end{align*}
and
\[\|U_{p_k}^{q_k-1} \phi_k\|_{L^2(B_1(x_0))} \le CR^{-(n-4-\eta_0)(q_k-1)} \|\phi_k\|_{L^{2n \over n-4}(B_1(x_0))} \le C R^{-7} \|\Delta \phi_k\|_{L^2(\R^n)} = CR^{-7}.\]
Here $C > 0$ is a constant depending only on $n$, $\ep_0$ and $\eta_0$, and particularly, independent of $x_0$ and $R$.
Hence \eqref{eq-CZ-1} with $l = 0$ shows that
\begin{equation}\label{eq-CZ-3}
\|\psi_k\|_{L^{2n \over n-4}(B_{1/2}(x_0))} \le C \|\psi_k\|_{W^{2,2}(B_{1/2}(x_0))} \le CR^{(n-2)(p_k-1)}.
\end{equation}
On the other hand, it follows from \eqref{eq-CZ-3} that
\[\|V_{p_k}^{p_k-1} \psi_k\|_{L^{2n \over n-4}(B_{1/2}(x_0))} \le R^{(n-2)(1-p_k)} \|\psi_k\|_{L^{2n \over n-4}(B_{1/2}(x_0))} \le C.\]
Thus \eqref{eq-CZ-2} with $l = 1$ gives
\begin{equation}\label{eq-CZ-4}
\|\phi_k\|_{W^{2,{2n \over n-4}}(B_{1/3}(x_0))} \le C\(\|\Delta \phi_k\|_{L^2(\R^n)} + \|V_{p_k}^{p_k-1} \psi_k\|_{L^{2n \over n-4}(B_{1/2}(x_0))} \) \le C.
\end{equation}
Putting \eqref{eq-CZ-3} and \eqref{eq-CZ-4} into \eqref{eq-CZ-1} with $l = 1$, we obtain
\begin{equation}\label{eq-CZ-5}
\|\psi_k\|_{W^{2,{2n \over n-4}}(B_{1/4}(x_0))} \le CR^{(n-2)(p_k-1)}.
\end{equation}
If $5 \le n \le 7$, we have that $W^{2,{2n \over n-4}}(B_r(x_0)) \hookrightarrow L^{\infty}(B_r(x_0))$ for $r > 0$.
Therefore, by means of \eqref{eq-psi_k}, \eqref{eq-CZ-4} and \eqref{eq-CZ-5}, we deduce
\begin{equation}\label{eq-CZ-6}
\|\phi_k\|_{L^{\infty}(\{|x| \ge 2\})} + \|\Delta \phi_k\|_{L^{\infty}(\{|x| \ge 2\})} \le C.
\end{equation}
For higher dimensional case, we repeat the above process to improve integrability of $\psi_k$'s and $\phi_k$'s.
After a finite number of iterations, we obtain \eqref{eq-CZ-6}.
The uniform boundedness of $\phi_k$'s and their Laplacians on the set $\{|x| \le 2\}$ is easier to deduce.
Our claim \eqref{eq-phi} is justified.

\medskip
\noindent \textsc{Step 2: Rough decay estimates of $\phi_k$'s and $\Delta \phi_k$'s.}
We assert that there exists a constant $C > 0$ depending only on $n$ and $\ep_0$ such that
\begin{equation}\label{eq-dec-phi-1}
|\phi_k(x)| \le \frac{C}{1+|x|^{n-4 \over 2}} \quad \text{and} \quad |\Delta \phi_k(x)| \le \frac{C}{1+|x|^{n \over 2}}
\end{equation}
for all $x \in \R^n$ and $k \in \N$. The arguments in this and the next steps are inspired by the proof of \cite[Lemma 3.3]{DKP}.

Fix any $x_0 \in \R^n$ such that $|x_0| \ge 2$ and $R = |x_0|$. Define also
\[\phi_{kR}(x) = R^{n-4 \over 2} \phi_k(Rx) \quad \text{and} \quad \psi_{kR}(x) = R^{n \over 2}\psi_k(Rx) \quad \text{in } \R^n\]
for each $k \in \N$. They solve
\[\begin{cases}
-\Delta \phi_{kR} = p_k (V_{p_k}(Rx))^{p_k-1} \psi_{kR} &\text{in } \R^n,\\
-\Delta \psi_{kR} = q_k R^4 (U_{p_k}(Rx))^{q_k-1} \phi_{kR} &\text{in } \R^n.
\end{cases}\]
For each $t > 1$, set $A_t = \{x \in \R^n: 1/t < |x| < t\}$.
For any $r > r' > 1$ and $l \in \N \cap \{0\}$ such that $n > 4l$, it holds that
\[\|\psi_{kR}\|_{W^{2,{2n \over n-4l}}(A_{r'})} \le C\(\|\psi_{kR}\|_{L^{2n \over n-4l}(A_r)} + R^4 \|(U_{p_k}(R\cdot))^{q_k-1} \phi_{kR}\|_{L^{2n \over n-4l}(A_r)} \)\]
and
\[\|\phi_{kR}\|_{W^{2,{2n \over n-4l}}(A_{r'})} \le C\(\|\phi_{kR}\|_{L^{2n \over n-4l}(A_r)} + \|(V_{p_k}(R\cdot))^{p_k-1} \psi_{kR}\|_{L^{2n \over n-4l}(A_r)}\)\]
provided that the right-hand sides are finite. Besides, we have that $\|\Delta \phi_{kR}\|_{L^2(\R^n)} = 1$.
Hence, arguing as in Step 1, we obtain
\[\|\phi_{kR}\|_{L^{\infty}(\{|x| \ge 2\})} + \|\Delta \phi_{kR}\|_{L^{\infty}(\{|x| \ge 2\})} \le C.\]
Combining this with \eqref{eq-phi}, we conclude that \eqref{eq-dec-phi-1} is true.

\medskip
\noindent \textsc{Step 3: Almost sharp decay estimates of $\phi_k$'s and $\Delta \phi_k$'s.}
Let $\eta_3 > 0$ be any small number.
We will show that there exists a constant $C > 0$ depending only on $n$, $\ep_0$ and $\eta_3$ such that
\begin{equation}\label{eq-dec-phi-2}
|\phi_k(x)| \le \frac{C}{1+|x|^{n-4-\eta_3}} \quad \text{and} \quad |\Delta \phi_k(x)| \le \frac{C}{1+|x|^{n-2-\eta_3}}
\end{equation}
for all $x \in \R^n$ and $k \in \N$.

Fix $k \in \N$, and let $\mu = \frac{n-4}{2}$ and $\nu = \frac{n}{2}$.
For an arbitrary number $\eta > 0$, we define
\[F_{k, \mu, \eta}(x) = \phi_k(x) - \frac{M_{\mu, \eta}}{|x|^{\mu+\eta}}
\quad \text{and} \quad
G_{k, \nu, \eta}(x) = \psi_k(x) - \frac{m_{\nu, \eta}}{|x|^{\nu+\eta}}
\quad \text{in } \{|x| \ge 1\},\]
where $M_{\mu, \eta}$ and $m_{\nu, \eta}$ are large positive numbers determined by their subscripts.
If $R > 1$ is given, we get from \eqref{eq-dec-phi-1} that
\[-\Delta G_{k, \nu, \eta}(x) = q_k U_{p_k}^{q_k-1} \phi_k - \frac{m_{\nu, \eta} (\nu+\eta)(n-2-(\nu+\eta))}{|x|^{\nu+\eta+2}} \le 0 \quad \text{in } \{1 < |x| < R\}\]
and
\[G_{k, \nu, \eta}(x) \le 0 \quad \text{on } \{|x| = 1\}\]
provided that $\nu+\eta < \min\{\mu+5, n-2\}$ and $m_{\nu, \eta}$ is chosen to be large enough.
Hence the maximum principle yields that
\[G_{k, \nu, \eta}(x) \le \max_{\{|x| = R\}} (G_{k, \nu, \eta})_+ \quad \text{in } \{1 < |x| < R\}.\]
Taking $R \to \infty$ and applying \eqref{eq-dec-phi-1} again, we deduce that $G_{k, \nu, \eta}(x) \le 0$, or equivalently,
\[\psi_k(x) \le \frac{m_{\nu, \eta}}{|x|^{\nu+\eta}} \quad \text{in } \{|x| \ge 1\}.\]
Similarly, one can show that $-\psi_k$ has the same upper bound in $\{|x| \ge 1\}$.
Therefore, we improve the decay rate of $\psi_k$ as follows:
\[|\psi_k(x)| \le \frac{m_{\nu, \eta}}{|x|^{\nu+\eta}} \quad \text{in } \{|x| \ge 1\}.\]
Resetting $\nu$ as $\nu+\eta$, we repeat the above procedure with the function $F_{k, \mu, \eta}$ to improve the decay rate of $\phi_k$'s.
This information can be used in further improvement of the decay rate of $\psi_k$'s.
We iterate such a process until we reach \eqref{eq-dec-3}.

\medskip
\noindent \textsc{Step 4: Completion of the proof.}
Eq. \eqref{eq-lin} is read as
\begin{equation}\label{eq-lin-1}
\int_{\R^n} (-\Delta U_{p_k})^{{1 \over p_k}-1} \Delta \phi_k\, \Delta \vph = p_k q_k \int_{\R^n} U_{p_k}^{q_k-1} \phi_k \vph \quad \text{for any } \vph \in C^{\infty}_c(\R^n).
\end{equation}
Also, there exists a function $\phi_{\infty} \in \mcd^{2,2}_0(\R^n)$ such that
\[\phi_k \rightharpoonup \phi_{\infty} \quad \text{in }\mcd^{2,2}_0(\R^n)
\quad \text{and} \quad
\phi_k \to \phi_{\infty},\, \Delta \phi_k \to \Delta \phi_{\infty} \quad \text{a.e. in } \R^n\]
as $k \to \infty$, passing to a subsequence.

Invoking Lemmas \ref{lem-cpt-12} and \ref{lem-cpt-13}, Proposition \ref{prop-cpt} and the dominated convergence theorem, we infer
\begin{equation}\label{eq-lin-2}
\int_{\R^n} (-\Delta U_{p_k})^{{1 \over p_k}-1} \Delta \phi_k\, \Delta \vph
= \int_{\R^n} V_{p_k}^{1-p_k} \Delta \phi_k\, \Delta \vph
\to \int_{\R^n} \Delta \phi_{\infty}\, \Delta \vph
\end{equation}
and
\begin{equation}\label{eq-lin-3}
p_k q_k \int_{\R^n} U_{p_k}^{q_k-1} \phi_k \vph \to \(\frac{n+4}{n-4}\) \int_{\R^n} U_1^{8 \over n-4} \phi_{\infty} \vph
\end{equation}
as $k \to \infty$.

Putting \eqref{eq-lin-1}, \eqref{eq-lin-2} and \eqref{eq-lin-3} together, we conclude that $\phi_{\infty}$ is a solution of \eqref{eq-lin-bi}.
By Proposition \ref{prop-lin-bi} (2), it follows that
\[\phi_{\infty} = \sum_{j=1}^n c_j \frac{\pa U_1}{\pa x_j} + c_0 \left[ x \cdot \nabla U_1 + \(\frac{n-4}{2}\)U_1 \right] \quad \text{in } \R^n.\]

We now assert that $\phi_{\infty} \ne 0$.
By \eqref{eq-dec-phi-2}, we can take $\vph = \phi_k$ in \eqref{eq-lin-1}. Applying the mean value theorem, we observe
\[\int_{\R^n} (-\Delta U_{p_k})^{{1 \over p_k}-1} (\Delta \phi_k)^2 =
\|\Delta \phi_k\|_{L^2(\R^n)}^2 + \int_{\R^n} \(V_{p_k}^{1-p_k}-1\) (\Delta \phi_k)^2 = 1 + o(1)\]
where $o(1) \to 0$ as $k \to \infty$. Since
\[p_k q_k \int_{\R^n} U_{p_k}^{q_k-1} \phi_k^2 \to \(\frac{n+4}{n-4}\) \int_{\R^n} U_1^{8 \over n-4} \phi_{\infty}^2 \quad \text{as } k \to \infty,\]
it holds that
\[\int_{\R^n} U_1^{8 \over n-4} \phi_{\infty}^2 = \frac{n-4}{n+4} \ne 0.\]
Consequently, we have that $\phi_{\infty} \ne 0$ and so $\sum_{j=0}^n |c_j| \ne 0$.
However, \eqref{eq-lin-0} and Corollary \ref{cor-cpt-12} imply
\[\int_{\R^n} \Delta \phi_{\infty}\, \Delta \(\frac{\pa U_1}{\pa x_j}\)
= \int_{\R^n} \Delta \phi_{\infty}\, \Delta \left[ x \cdot \nabla U_1 + \(\frac{n-4}{2}\)U_1 \right] = 0\]
for $j = 1, \cdots, n$. Hence $c_0 = \cdots = c_n = 0$, a contradiction.
This completes the proof of the theorem.
\end{proof}

Theorem \ref{thm-deg} and the equivalence between system \eqref{eq-LEs} and equation \eqref{eq-LEsc} yield the following non-degeneracy result for \eqref{eq-LEs} with $p$ near $1$.
\begin{cor}\label{cor1}
There exists a small number $\ep_1 \in (0,\ep_0]$ such that if $|p-1| \le \ep_1$ and $(U_p,V_p)$ is the unique positive ground state of \eqref{eq-LEs} with $U_p(0) = 1$,
then the solution space of the linear equation
\begin{equation}\label{eq-lins-2}
\begin{cases}
-\Delta \phi = p V_p^{p-1} \psi &\text{in } \R^n,\\
-\Delta \psi = q U_p^{q-1} \phi &\text{in } \R^n,
\end{cases}
\quad \lim_{|x|\to\infty} (\phi(x),\psi(x)) = (0,0)
\end{equation}
is spanned by
\[\(\frac{\pa U_p}{\pa x_1}, \frac{\pa V_p}{\pa x_1}\),\, \cdots,\, \(\frac{\pa U_p}{\pa x_n}, \frac{\pa V_p}{\pa x_n}\)\]
and
\[\(x \cdot \nabla U_p + \(n-2 - \frac{n}{p+1}\) U_p, x \cdot \nabla V_p + \(n-2 - \frac{n}{q+1}\) V_p\).\]
\end{cor}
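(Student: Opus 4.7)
The plan is to reduce the coupled linear system \eqref{eq-lins-2} to the scalar linear equation \eqref{eq-lin} treated by Theorem \ref{thm-deg}, and then recover $\psi$ from $\phi$ through the algebraic relation forced by the first equation. Since $V_p = (-\Delta U_p)^{1/p} > 0$, the first equation of \eqref{eq-lins-2} can be solved for $\psi$:
\[ \psi = \frac{1}{p}\,V_p^{1-p}(-\Delta\phi) = \frac{1}{p}\,(-\Delta U_p)^{\frac{1}{p}-1}(-\Delta\phi). \]
Substituting this into the second equation of \eqref{eq-lins-2} produces precisely
\[ (-\Delta)\!\left((-\Delta U_p)^{\frac{1}{p}-1}(-\Delta\phi)\right) = pq\, U_p^{q-1}\phi \quad\text{in }\R^n, \]
which is \eqref{eq-lin}. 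If one can place $\phi$ in $\mcd^{2,2}_0(\R^n)$, Theorem \ref{thm-deg} immediately gives
\[ \phi = c_0\!\left(x\cdot\nabla U_p + \!\left(n-2-\tfrac{n}{p+1}\right)U_p\right) + \sum_{j=1}^n c_j\,\frac{\pa U_p}{\pa x_j}. \]

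The main obstacle is the regularity step, since \eqref{eq-lins-2} only assumes the weak decay $\phi,\psi\to 0$ at infinity. I would handle it by a bootstrap identical in spirit to Steps 1--3 of the proof of Theorem \ref{thm-deg}, replacing the normalization $\|\Delta\phi_k\|_{L^2}=1$ with the $L^\infty$ control that follows from the decay hypothesis together with interior elliptic regularity. Iterating Green's representation and Calderón--Zygmund / Hardy--Littlewood--Sobolev estimates along the system, while using the uniform pointwise decay of $U_p, V_p$ from Lemma \ref{lem-cpt-12} and of their derivatives from Corollary \ref{cor-cpt-12}, one upgrades $\phi,\psi$ to $L^s$ spaces for all large $s$ and then derives the almost-sharp pointwise bounds $|\phi(x)|\le C(1+|x|)^{-(n-4-\eta)}$ and $|\Delta\phi(x)|\le C(1+|x|)^{-(n-2-\eta)}$; in particular $\Delta\phi\in L^2(\R^n)$ and $\phi\in\mcd^{2,2}_0(\R^n)$.

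Finally I would identify the corresponding $\psi$ component by applying $\psi=\tfrac{1}{p}V_p^{1-p}(-\Delta\phi)$ to each basis element. For $\phi=\pa_{x_j}U_p$ one has $-\Delta\phi = \pa_{x_j}V_p^{p} = p V_p^{p-1}\pa_{x_j}V_p$, so $\psi=\pa_{x_j}V_p$. For $\phi_0 = x\cdot\nabla U_p + \alpha_p\,U_p$ with $\alpha_p:=n-2-\tfrac{n}{p+1}$, the identity $\Delta(x\cdot\nabla U_p) = 2\Delta U_p + x\cdot\nabla \Delta U_p$ yields
\[ -\Delta\phi_0 = (\alpha_p+2)V_p^{p} + p V_p^{p-1}\,x\cdot\nabla V_p, \qquad \psi_0 = \tfrac{\alpha_p+2}{p}\,V_p + x\cdot\nabla V_p. \]
A short algebraic manipulation of \eqref{eq-hyp} shows $\alpha_p = \tfrac{2(p+1)}{pq-1}$ and $\tfrac{\alpha_p+2}{p} = \tfrac{2(q+1)}{pq-1} = n-2-\tfrac{n}{q+1}$, so $\psi_0$ is exactly the second component of the dilation generator. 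Combining these identifications with the decomposition of $\phi$ produces the claimed basis of the solution space, completing the proof.
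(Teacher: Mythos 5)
Your proposal is correct and follows essentially the same route as the paper: reduce \eqref{eq-lins-2} to the scalar equation \eqref{eq-lin} via $\psi=\tfrac{1}{p}V_p^{1-p}(-\Delta\phi)$, verify $\phi\in\mcd^{2,2}_0(\R^n)$ from the decay hypothesis by the bootstrap of the proof of Theorem \ref{thm-deg}, apply that theorem, and then read off the $\psi$-components of each basis element. The only (cosmetic) difference is that you compute $\psi_0$ by the identity $\Delta(x\cdot\nabla U_p)=2\Delta U_p+x\cdot\nabla\Delta U_p$ rather than by differentiating the scaling family $V_{p,\delta}$ in $\delta$ as the paper does; your algebra checks out.
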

\begin{proof}
The relationship between \eqref{eq-lin} and \eqref{eq-lins-2} was already explored in \eqref{eq-psi_k} and \eqref{eq-lins}.
Moreover, with the help of the decay assumption of $(\phi,\psi)$ and elliptic regularity, one can argue as in Step 3 of the proof of Theorem \ref{thm-deg} to verify that $\phi \in \mcd^{2,2}_0(\R^n)$.
Therefore, the necessary condition to apply Theorem \ref{thm-deg} is fulfilled and so
\[\phi = \frac{\pa U_p}{\pa x_1},\, \cdots,\, \frac{\pa U_p}{\pa x_n} \quad \text{and} \quad x \cdot \nabla U_p + \(n-2 - \frac{n}{p+1}\) U_p.\]
Suppose that $\phi = \frac{\pa U_p}{\pa x_j}$ for some $j = 1, \cdots, n$. Differentiating the first equation in \eqref{eq-LEs} with respect to $x_j$ and using \eqref{eq-lins-2}, we find
\[\psi = - \frac{1}{p} V_p^{1-p} \Delta \(\frac{\pa U_p}{\pa x_j}\) = \frac{1}{p} V_p^{1-p} \cdot p V_p^{p-1} \frac{\pa V_p}{\pa x_j} = \frac{\pa V_p}{\pa x_j}.\]
Set
\[V_{p,\delta}(x) = \delta^{2(q+1) \over pq-1} V_p(\delta x) \quad \text{in } \R^n.\]
If $\phi = x \cdot \nabla U_p + (n-2 - \frac{n}{p+1}) U_p$, then \eqref{eq-U_pmu-2} shows
\begin{align*}
\psi &= - \frac{1}{p} V_p^{1-p} \left. \Delta \(\frac{\pa U_{p,\delta}}{\pa \delta}\) \right|_{\delta=1}
= \frac{1}{p} V_p^{1-p} \cdot \left. p V_{p,\delta}^{p-1} \frac{\pa V_{p,\delta}}{\pa \delta} \right|_{\delta=1} \\
&= x \cdot \nabla V_p + \(n-2 - \frac{n}{q+1}\) V_p.
\end{align*}
The proof is done.
\end{proof}

\section{Non-degeneracy of the Lane-Emden system near $p = \frac{n+2}{n-2}$}\label{sec_pn}
The main results of this section are Theorem \ref{thm-deg-so} and Corollary \ref{cor2},
whose proof depends on arguments similar to those used in the previous section.
In this time, we use the following well-known uniqueness and non-degeneracy results about the second-order critical equation
\begin{equation}\label{eq-LE-so}
\begin{cases}
-\Delta u = u^{n+2 \over n-2} &\text{in } \R^n,\\
u > 0 &\text{in } \R^n
\end{cases}
\end{equation}
for $n \ge 3$.
\begin{prop}%\label{prop-lin-so}
\textnormal{(1) (uniqueness)} Any smooth solution of \eqref{eq-LE-so} is expressed as
\[w_{\delta,\xi}^*(x) := c_n^* \(\frac{\delta}{\delta^2+|x-\xi|^2}\)^{n-2 \over 2}\]
for some $\delta > 0$, $\xi \in \R^n$ and $c_n^* = [n(n-2)]^{{n-2 \over 4}}$.

\medskip \noindent \textnormal{(2) (non-degeneracy)}
The solution space of the linear equation
\begin{equation}\label{eq-lin-so}
-\Delta \phi = \(\frac{n+2}{n-2}\) u^{4 \over n-2} \phi \quad \text{in } \R^n, \quad \phi \in \mcd^{1,2}_0(\R^n)
\end{equation}
is spanned by
\[\frac{\pa u}{\pa x_1},\, \cdots,\, \frac{\pa u}{\pa x_n} \quad \text{and} \quad x \cdot \nabla u + \(\frac{n-2}{2}\)u.\]
Here, $\mcd^{1,2}_0(\R^n)$ be the completion of $C^{\infty}_c(\R^n)$ with respect to the norm $\|\nabla \cdot\|_{L^2(\R^n)}$.
\end{prop}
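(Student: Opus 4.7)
Both parts are classical, so my plan is to reduce each to well-established results rather than reproving them. For (1) I would invoke the theorem of Caffarelli, Gidas and Spruck: the moving plane method, combined with a Kelvin transform to compensate for the absence of \emph{a priori} decay, shows that every positive $C^2$ solution of $-\Delta u = u^{(n+2)/(n-2)}$ on $\R^n$ is radially symmetric about some point $\xi \in \R^n$ and strictly decreasing in $|x-\xi|$. A direct ODE analysis of the resulting radial profile then identifies $u$ with $w_{\delta,\xi}^*$ for some $\delta > 0$. An alternative conformal route, due to Obata, transplants the equation to $S^n$ via stereographic projection, where it becomes the constant-scalar-curvature equation for a metric pointwise conformal to the round one; Obata's rigidity theorem forces that conformal metric to be round, and pulling back recovers the claimed family.

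For (2) my approach is the standard spherical-harmonic decomposition of $\phi$ around the center of $u$. After a dilation and translation we may assume $(\delta,\xi) = (1,0)$, so that $u(x) = c_n^* (1+|x|^2)^{-(n-2)/2}$. Writing $\phi(r\omega) = \sum_{k \ge 0} \phi_k(r)\, Y_k(\omega)$ with $Y_k$ a spherical harmonic of degree $k$ on $S^{n-1}$, equation \eqref{eq-lin-so} separates into a family of second-order linear ODEs in $r$ for the radial profiles $\phi_k$. Each ODE can be reduced, via an Emden--Fowler change of variables, to a constant-coefficient problem whose indicial exponents at $r = 0$ and $r = \infty$ are explicit. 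The $\mcd^{1,2}_0(\R^n)$ integrability condition kills all solutions that are too singular at the origin or decay too slowly at infinity, and a careful exponent count shows that a nontrivial surviving mode occurs only for $k = 0$ (a one-dimensional space generated by the dilation vector field $x \cdot \nabla u + \frac{n-2}{2} u$) and for $k = 1$ (the $n$-dimensional space of translations $\pa u/\pa x_j$). This yields the claimed $(n+1)$-dimensional kernel; the argument was carried out in detail by Bianchi--Egnell and by Rey.

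The only step that requires genuine care is the exponent bookkeeping in mode $k = 1$: the relevant radial ODE admits two linearly independent solutions, one being the translation generator and the other a slowly-decaying companion. Ruling out the latter comes down to verifying it fails to lie in $\mcd^{1,2}_0(\R^n)$, which is purely computational but is the spot where a naive argument can go wrong. All higher modes $k \ge 2$ are eliminated at once either by Hardy-type inequalities on the transformed ODE or by a direct maximum-principle comparison, so no additional obstacle is expected there.
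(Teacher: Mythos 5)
Your proposal is correct and matches the paper, which simply cites Aubin, Talenti and Caffarelli--Gidas--Spruck for the classification in (1) and Rey for the non-degeneracy in (2); your sketch of the moving-plane/Kelvin-transform argument and of the spherical-harmonic decomposition is the standard content of those references. No further detail is needed here.
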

\begin{proof}
Result (1) has been proved by Aubin \cite{A}, Talenti \cite{T} and Caffarelli et al. \cite{CGS}.
A proof of (2) can be found in Rey  \cite{R}.
\end{proof}
We will assume that $p$ is {\em slightly smaller} than $\frac{n+2}{n-2}$.
By interchanging the role of $U$ and $V$ and of $p$ and $q$, we can also cover the case that $p$ is {\em slightly bigger} than $\frac{n+2}{n-2}$.
Adapting the arguments in Subsection \ref{subsec-cpt}, we obtain the next two results.
\begin{prop}\label{prop-cpt-so}
Suppose that $n \ge 3$ and $2^* = \frac{n+2}{n-2}$.
Let $\{p_k\}_{k=1}^{\infty}$ be a sequence such that $p_k \in [\frac{n}{n-2}, 2^*]$ for all $k \in \N$ and $p_k \to 2^*$ as $k \to \infty$,
Also, let $\{(U_{p_k}, V_{p_k})\}_{k=1}^{\infty}$ be the sequence of the unique positive ground states of \eqref{eq-LEs} with $p = p_k$ such that $U_{p_k}(0) = 1$.
Then we have that
\[(U_{p_k}, V_{p_k}) \to (U_{2^*}, V_{2^*}) \quad \text{in } \mcd^{1,2}_0(\R^n) \times \mcd^{1,2}_0(\R^n) \quad \text{as } k \to \infty.\]
Here $U_{2^*}$ is the unique positive radial solution of \eqref{eq-LE-so} with $U_{2^*}(0) = 1$ and $V_{2^*} = U_{2^*}$ in $\R^n$.
In other words, $(U_{2^*}, V_{2^*}) = (w_{b_n,0}^*, w_{b_n,0}^*)$ in $\R^n$ where $b_n := c_n^{2 \over n-2}$.
\end{prop}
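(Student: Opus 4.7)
The plan is to mirror the proof of Proposition \ref{prop-cpt}, adapting the target spaces to the symmetric setting: since $p_k, q_k \to \frac{n+2}{n-2}$, both exponents lie above $\frac{n}{n-2}$ for large $k$, so the first case of the asymptotic profile \eqref{eq-dec} applies to both components and the natural function space pair is $\mcd^{1,2}_0(\R^n) \times \mcd^{1,2}_0(\R^n)$.

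First I would produce the uniform $L^\infty$ bound on $(U_{p_k}, V_{p_k})$: $\|U_{p_k}\|_{L^\infty(\R^n)} \le U_{p_k}(0) = 1$ by radial monotonicity, and $\|V_{p_k}\|_{L^\infty(\R^n)} \le C$ via a Green's representation argument for $V_{p_k}(0)$ after securing uniform boundedness of $\|U_{p_k}\|_{L^{q_k+1}(\R^n)}$ exactly as in Lemma \ref{lem-cpt-11}, with $w^*_{b_n,0}$ playing the role of the test function in \eqref{eq-K_pq} and using that $\frac{p_k+1}{p_k(q_k+1)} \to \frac{n-2}{n+2} < 1$. Elliptic regularity then gives a subsequence converging in $C^2_{\mathrm{loc}}(\R^n)$ to a radial pair $(\wtu, \wtv)$ solving $-\Delta \wtu = \wtv^{2^*}$ and $-\Delta \wtv = \wtu^{2^*}$ with $\wtu(0) = 1$ and with $\wtu, \wtv > 0$ by the maximum principle (nondegeneracy of $\wtv$ at the origin follows from the Green's representation of $V_{p_k}(0)$ together with the locally uniform lower bound on $U_{p_k}$). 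By the symmetry of this limit system, $(\wtv, \wtu)$ is also a positive radial ground state, so Hulshof--Van der Vorst's uniqueness up to scaling produces $\delta > 0$ with $\wtv(x) = \delta^{(n-2)/2}\wtu(\delta x)$ and $\wtu(x) = \delta^{(n-2)/2}\wtv(\delta x)$; composing these relations and evaluating at the origin forces $\delta^{n-2} = 1$ and hence $\wtu = \wtv$, so Aubin--Talenti together with the normalization $\wtu(0) = 1$ identifies this common function as $w^*_{b_n,0}$.

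Next I would prove uniform polynomial decay of the form $U_{p_k}(x), V_{p_k}(x) \le C(1+|x|)^{-(n-2)+\eta_0}$ by a Hardy--Littlewood--Sobolev bootstrap paralleling Lemma \ref{lem-cpt-12}, run symmetrically on the two components: the matching of $L^{q_k+1}$ and $L^{p_k+1}$ norms forced by Fatou's lemma and \eqref{eq-K_pq-2} makes the tail integrals $\int_{\R^n \setminus B_R(0)} U_{p_k}^{q_k+1}$ and $\int_{\R^n \setminus B_R(0)} V_{p_k}^{p_k+1}$ uniformly small, and the splitting \eqref{eq-dec-2} of the Green's convolution is carried out in parallel on $|\cdot|^{-(n-2)} \ast U_{p_k}^{q_k}$ and $|\cdot|^{-(n-2)} \ast V_{p_k}^{p_k}$. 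Finally, this uniform decay combined with the dominated convergence theorem yields $\|\nabla U_{p_k}\|_{L^2(\R^n)}^2 = \int_{\R^n} V_{p_k}^{p_k} U_{p_k} \to \int_{\R^n} (w^*_{b_n,0})^{2^*+1} = \|\nabla U_{2^*}\|_{L^2(\R^n)}^2$ and likewise for $V_{p_k}$, which together with locally $C^2$ convergence upgrades weak to strong convergence in $\mcd^{1,2}_0(\R^n) \times \mcd^{1,2}_0(\R^n)$. The main obstacle is the decay step: the asymmetric argument of Lemma \ref{lem-cpt-12} exploited the different decay scales of $U$ and $V$, while here the bootstrap must be executed in tandem on both components in order to stay on the symmetric critical hyperbola \eqref{eq-hyp} and to close the iteration at the decay rate $(n-2) - \eta_0$.
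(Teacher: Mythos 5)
Your overall architecture is the one the paper intends (its own ``proof'' consists of the remark that one adapts Subsection~\ref{subsec-cpt}, plus a citation of Souplet for $U_{2^*}=V_{2^*}$), and most of it is sound: the uniform $L^\infty$ bound via $w^*_{b_n,0}$ as a test function in \eqref{eq-K_pq} and the Green's representation of $V_{p_k}(0)$, the $C^2_{\mathrm{loc}}$ compactness, and the final upgrade from weak to strong convergence in the Hilbert space $\mcd^{1,2}_0(\R^n)$ using $\|\nabla U_{p_k}\|_{L^2}^2=\int_{\R^n}V_{p_k}^{p_k}U_{p_k}$ all go through. Your identification $\widetilde U=\widetilde V$ via the symmetry of the limit system and Hulshof--Van der Vorst uniqueness up to scaling (forcing $\delta^{n-2}=1$) is a valid alternative to the paper's citation of Souplet; an even shorter route is the energy identity $\int_{\R^n}|\nabla(\widetilde U-\widetilde V)|^2=-\int_{\R^n}(\widetilde U^{2^*}-\widetilde V^{2^*})(\widetilde U-\widetilde V)\le 0$, which is exactly the device the paper uses at the linearized level in Theorem~\ref{thm-deg-so}.

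The genuine gap is the uniform decay step: the Hardy--Littlewood--Sobolev bootstrap of Lemma~\ref{lem-cpt-12} does \emph{not} transplant to $p$ near $2^*$, even ``run in tandem''. The chain \eqref{eq-V_po-2} closes because $a_1=a_4p_k$ with all intermediate exponents $>1$; for $p_k$ near $1$ and $a_1$ near the target $\frac{n}{n-2}$ one has $a_4=a_1/p_k\approx a_1>1$, but for $p_k$ near $\frac{n+2}{n-2}$ the same choice gives $a_4=a_1/p_k\approx\frac{n}{n+2}<1$, so the HLS step is inadmissible. Running the chain symmetrically (bounding $\|V\|_{L^{a_1}}$ by $\|U\|_{L^{a_3/p}}$ and then $\|U\|_{L^{a_3/p}}$ by $\|V\|_{L^{c_1}}$) one finds $\frac{1}{c_1}=\frac{pq}{a_1}-\frac{pq-1}{p+1}$, whose only self-consistent exponent is $a_1=p+1$; this merely reproduces the decay $r^{-n/(p_k+1)}\approx r^{-(n-2)/2}$ already implied by the energy bound, and iterating drags the exponent back to $p+1$. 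The rate $\frac{n-2}{2}$ is the scaling-critical barrier here, since then $U^{q_k-1},V^{p_k-1}\lesssim r^{-2}$, so neither the HLS chain nor the exponent-gaining comparison iteration of Step~3 of Theorem~\ref{thm-deg} improves it. What does work is to exploit smallness of the \emph{constant} rather than a gain in the exponent: the tail estimate \eqref{eq-dec-3} (valid for both components, as you note) together with radial monotonicity gives $U_{p_k}(x)+V_{p_k}(x)\le C\zeta^{1/(q_k+1)}|x|^{-n/(q_k+1)}$ for $|x|\ge 2R$, hence $q_kU_{p_k}^{q_k-1}+p_kV_{p_k}^{p_k-1}\le\zeta'|x|^{-2}$ with $\zeta'$ small uniformly in $k$; then $W_k=U_{p_k}+V_{p_k}$ satisfies $-\Delta W_k\le\zeta'|x|^{-2}W_k$ outside $B_{2R}(0)$, and comparison with $r^{-\sigma}$, a positive supersolution of $-\Delta-\zeta'|x|^{-2}$ whenever $\sigma(n-2-\sigma)>\zeta'$ (available for any fixed $\sigma<n-2$ once $\zeta'$ is small), yields $W_k\le Cr^{-(n-2)+\eta_0}$. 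A single Green's-representation pass then sharpens $V_{p_k}$ and $U_{p_k}$ to the rate $r^{-(n-2)}$, since $(n-2)q_k\to n+2>n$. With this replacement for the decay step, the rest of your argument is complete.
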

\begin{proof}
The fact that $V_{2^*} = U_{2^*}$ comes from \cite[Lemma 2.7 and Remark 2.1 (a)]{Sou}.
\end{proof}
\begin{lemma}%\label{lem-cpt-21}
Given $\ep_2 > 0$ small enough, we assume that $|p_k - \frac{n+2}{n-2}| \le \ep_2$ for all $k \in \N$.
Then one can find a constant $C > 0$ depending only on $n$, $\ep_2$ and $\eta_1$ such that
\[|\nabla^l U_{p_k}(x)| + |\nabla^l V_{p_k}(x)| \le \frac{C}{1+|x|^{n-2+l}}\]
for all $x \in \R^n$, $k \in \N$ and $l = 1, 2, 3$.
\end{lemma}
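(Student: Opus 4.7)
The plan is to adapt the arguments of Subsection \ref{subsec-cpt} (namely Lemmas \ref{lem-cpt-11}, \ref{lem-cpt-12} and Corollary \ref{cor-cpt-12}) to the regime $p_k\to 2^*=\frac{n+2}{n-2}$. The situation is in fact cleaner here because both $U_{p_k}$ and $V_{p_k}$ share the common leading decay exponent $n-2$, which matches the Newton kernel of $-\Delta$; consequently, the bootstrap of Section \ref{sec_p1} closes in a single cycle and no $\eta$-loss appears in the final estimate.

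\textbf{Step 1 (uniform $L^\infty$ bounds and integral tail estimate).} Since $U_{p_k}(0)=1$ and $U_{p_k}$ is radial decreasing, $\|U_{p_k}\|_{L^\infty(\R^n)}\le 1$. Testing the minimization problem \eqref{eq-K_pq} against $w^*_{b_n,0}$ and applying the dominated convergence theorem yields uniform boundedness of $K_{p_k,q_k}$, hence $\|U_{p_k}\|_{L^{q_k+1}(\R^n)}\le C$. Splitting the Green's representation for $V_{p_k}(0)$ between $B_1(0)$ and its complement and applying H\"older's inequality exactly as in Lemma \ref{lem-cpt-11} gives $\|V_{p_k}\|_{L^\infty(\R^n)}\le C$. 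Proposition \ref{prop-cpt-so} combined with a Br\'ezis--Lieb-type argument then produces the uniform tail decay $\int_{\R^n\setminus B_R(0)}(U_{p_k}^{q_k+1}+V_{p_k}^{p_k+1})\to 0$ as $R\to\infty$, the analog of \eqref{eq-dec-3}.

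\textbf{Step 2 (sharp pointwise decay).} Inserting the integral representations $U_{p_k}=\ga_n|\cdot|^{-(n-2)}\ast V_{p_k}^{p_k}$ and $V_{p_k}=\ga_n|\cdot|^{-(n-2)}\ast U_{p_k}^{q_k}$ into the Hardy--Littlewood--Sobolev and H\"older inequalities as in \eqref{eq-V_po}--\eqref{eq-V_po-2}, and using the tail bound from Step 1, we obtain uniform bounds $\|U_{p_k}\|_{L^{n/(n-2)+\eta'}(\R^n)}+\|V_{p_k}\|_{L^{n/(n-2)+\eta'}(\R^n)}\le C$ for some small $\eta'>0$. Radial monotonicity then gives $U_{p_k}(x)+V_{p_k}(x)\le C|x|^{-(n-2)+\eta'}$ for $|x|\ge 1$. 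Re-inserting this improved pointwise bound into the two convolution representations, whose integrands are now integrable against $|\cdot|^{-(n-2)}$, yields
\[
U_{p_k}(x)+V_{p_k}(x)\le \frac{C}{1+|x|^{n-2}},\qquad x\in\R^n,\ k\in\N.
\]

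\textbf{Step 3 (gradient estimates by rescaling).} For $x_0\in\R^n$ with $|x_0|=R\ge 2$, set $\tilde U_k(y)=R^{n-2}U_{p_k}(Ry)$ and $\tilde V_k(y)=R^{n-2}V_{p_k}(Ry)$; these satisfy
\[
-\Delta\tilde U_k=R^{n-(n-2)p_k}\tilde V_k^{p_k},\qquad -\Delta\tilde V_k=R^{n-(n-2)q_k}\tilde U_k^{q_k},
\]
with prefactors uniformly bounded, since the exponents $n-(n-2)p_k$ and $n-(n-2)q_k$ stay negative (and close to $-2$) for $p_k$ near $2^*$ and $R\ge 1$. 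By Step 2, $\tilde U_k$ and $\tilde V_k$ are uniformly bounded on the annulus $\{1/2<|y|<2\}$; standard interior $C^{3,\alpha}$ regularity for the resulting elliptic system produces uniform $C^3$-bounds on a smaller annulus containing $y_0=x_0/R$. Undoing the scaling gives $|\nabla^l U_{p_k}(x_0)|+|\nabla^l V_{p_k}(x_0)|\le CR^{-(n-2+l)}$ for $l=1,2,3$, and the corresponding bound on $\{|x|\le 2\}$ follows from \eqref{eq-LEs}, the $L^\infty$ estimate of Step 1 and elliptic regularity. The main technical point throughout is to keep every constant independent of $k$, which forces us to take $\ep_2$ small enough so that $p_k$ and $q_k$ remain in a narrow neighbourhood of $2^*$ and the relevant Hardy--Littlewood--Sobolev and Sobolev constants depend continuously (hence uniformly) on the exponents.
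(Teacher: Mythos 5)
The paper gives no written proof of this lemma (it merely says the result follows by "adapting the arguments in Subsection \ref{subsec-cpt}"), and your three-step scheme --- uniform $L^\infty$ bound plus integral tail estimate, then sharp pointwise decay via the convolution representations, then rescaled interior elliptic estimates for the derivatives --- is exactly the intended adaptation of Lemmas \ref{lem-cpt-11}, \ref{lem-cpt-12} and Corollary \ref{cor-cpt-12}. Steps 1 and 3 are correct as written: in Step 1 the H\"older exponent condition $(n-2)(q_k+1)>n$ still holds since $q_k+1\to\frac{2n}{n-2}$, and in Step 3 the prefactors $R^{\,n-(n-2)p_k}$, $R^{\,n-(n-2)q_k}$ are indeed uniformly bounded for $R\ge1$ because $p_k,q_k\ge\frac{n}{n-2}$.

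There is, however, a genuine gap in Step 2 where you invoke the chain \eqref{eq-V_po}--\eqref{eq-V_po-2} "as is". That chain splits $U_{p_ko}^{q_k}=U_{p_ko}^{(p_kq_k-1)/p_k}\cdot U_{p_ko}^{1/p_k}$ and then applies Hardy--Littlewood--Sobolev to pass from $\|U_{p_k}\|_{L^{a_3/p_k}}$ to $\|V_{p_k}\|_{L^{a_4p_k}}$. Near $p=1$ one has $a_3/p_k\approx\frac{n}{n-4}$ and everything is admissible, but near $p=\frac{n+2}{n-2}$, with $a_1$ close to $\frac{n}{n-2}$, one computes $\frac1{a_3}\to\frac{n-2}{n+2}$, hence $a_3/p_k\to1$ and $\frac1{a_4}=\frac{p_k}{a_3}+\frac2n\to1+\frac2n>1$. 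Thus $a_4<1$ (HLS is not applicable) and, worse, $\|U_{p_k}\|_{L^{a_3/p_k}}=\infty$ since by \eqref{eq-dec} $U_{p_k}\sim r^{-(n-2)}$ lies in $L^s$ only for $s>\frac{n}{n-2}$. The fix stays within the same circle of ideas but requires a different splitting: use $U_{p_ko}^{q_k}=U_{p_ko}^{q_k-1}\cdot U_{p_ko}$, note that $\frac{q_k-1}{q_k+1}\to\frac2n$ exactly at the critical point so that the leftover factor lands in $L^{c}$ with $c\ge a_1$ (then interpolate with $\|U_{p_k}\|_{L^\infty}\le1$), and close the loop \emph{symmetrically}: the $V$-equation gives $\|V_{p_k}\|_{L^{a_1}}\le C+C\zeta^{\sigma}\|U_{p_k}\|_{L^{a_1}}^{\theta}$ and the $U$-equation gives the mirror estimate (using the tail bound for $\int_{\R^n\setminus B_R}V_{p_k}^{p_k+1}=\int_{\R^n\setminus B_R}U_{p_k}^{q_k+1}+o(1)$, which you correctly anticipate), whence adding the two and absorbing yields the uniform $L^{\frac{n}{n-2}+\eta'}$ bound. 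With that repair the remainder of your Step 2, and hence the whole proof, goes through.
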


By employing the above results and slightly modifying the proof of Theorem \ref{thm-deg}, one can deduce the following theorem.
\begin{thm}\label{thm-deg-so}
There exists a small number $\ep_3 \in (0,\ep_2]$ such that if $|p-\frac{n+2}{n-2}| \le \ep_3$ and $U_p$ is the unique positive ground state of \eqref{eq-LE-so} with $U_p(0) = 1$,
then the solution space of the linear equation
\[(-\Delta) \((-\Delta U_p)^{{1 \over p}-1} (-\Delta \phi) \) = pq U_p^{q-1} \phi \quad \text{in } \R^n, \quad \phi \in \mcd^{2,2}_0(\R^n)\]
is spanned by
\[\frac{\pa U_p}{\pa x_1},\, \cdots,\, \frac{\pa U_p}{\pa x_n} \quad \text{and} \quad x \cdot \nabla U_p + \(n-2 - \frac{n}{p+1}\) U_p.\]
\end{thm}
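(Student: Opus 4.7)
The plan is to mirror the proof of Theorem \ref{thm-deg}, replacing the Paneitz--Branson equation by the Yamabe equation as the limit problem. The main new feature is that the limit problem is a \emph{system} rather than a scalar equation, which I will decouple by a parity trick.

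First, one verifies as in the opening of the proof of Theorem \ref{thm-deg} that the functions $\frac{\pa U_p}{\pa x_j}$ and $x\cdot\nabla U_p + (n-2-\frac{n}{p+1})U_p$ solve the linearized equation, by differentiating \eqref{eq-LEsc} in translations and in the dilation $U_{p,\delta}(x) = \delta^{2(p+1)/(pq-1)}U_p(\delta x)$ at $\delta=1$. Then I argue by contradiction: suppose there exist $p_k \to \frac{n+2}{n-2}$, ground states $U_{p_k}$ (with $U_{p_k}(0) = 1$), and solutions $\phi_k\in\mcd^{2,2}_0(\R^n)$ of the corresponding linearized scalar equation with $\|\Delta\phi_k\|_{L^2(\R^n)} = 1$ that are $\Delta$-orthogonal to the analogues $Z_{0p_k},Z_{1p_k},\ldots,Z_{np_k}$ of the kernel elements. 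Setting $\psi_k := \frac{1}{p_k}V_{p_k}^{1-p_k}(-\Delta\phi_k)$ recasts the problem as the linearized system $-\Delta\phi_k = p_k V_{p_k}^{p_k-1}\psi_k$, $-\Delta\psi_k = q_k U_{p_k}^{q_k-1}\phi_k$ in $\R^n$.

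Next, I run the bootstrap of Steps~1--3 of the proof of Theorem \ref{thm-deg} in this setting. The arguments are in fact simpler here, because both $U_{p_k}$ and $V_{p_k}$ satisfy the uniform decay $|\nabla^l U_{p_k}| + |\nabla^l V_{p_k}| \le C(1+|x|)^{-(n-2+l)}$ for $l=0,1,2,3$ provided by the lemma stated just before the theorem. After the bootstrap one obtains $\|\phi_k\|_{L^{\infty}(\R^n)} + \|\Delta\phi_k\|_{L^{\infty}(\R^n)} \le C$ and, for any small $\eta>0$, the almost-sharp pointwise decay $|\phi_k(x)| + |\psi_k(x)| \le C(1+|x|)^{-(n-2-\eta)}$ together with corresponding fast decay of the Laplacians. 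These bounds imply in particular that $\phi_\infty,\psi_\infty\in\mcd^{1,2}_0(\R^n)$, where, along a subsequence, $\phi_k\rightharpoonup\phi_\infty$ in $\mcd^{2,2}_0(\R^n)$ and $\psi_k\to\psi_\infty$ locally smoothly. Passing to the limit in the linearized system using Proposition \ref{prop-cpt-so} and dominated convergence (justified by the decay bounds), and recalling $V_{2^*}=U_{2^*}$, yields
\begin{equation*}
-\Delta\phi_\infty = \tfrac{n+2}{n-2}\,U_{2^*}^{4/(n-2)}\psi_\infty, \qquad -\Delta\psi_\infty = \tfrac{n+2}{n-2}\,U_{2^*}^{4/(n-2)}\phi_\infty \quad \text{in } \R^n.
\end{equation*}

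The new ingredient is the decoupling. Setting $w_\pm := \phi_\infty \pm \psi_\infty \in \mcd^{1,2}_0(\R^n)$, the function $w_-$ satisfies $-\Delta w_- = -\tfrac{n+2}{n-2}U_{2^*}^{4/(n-2)}w_-$; testing against $w_-$ gives $\int_{\R^n}|\nabla w_-|^2 = -\tfrac{n+2}{n-2}\int_{\R^n} U_{2^*}^{4/(n-2)}w_-^2 \le 0$, so $w_- \equiv 0$. Hence $\phi_\infty = \psi_\infty$ solves \eqref{eq-lin-so} at $u=U_{2^*}$. By the non-degeneracy proposition recalled at the beginning of this section, $\phi_\infty$ is a linear combination of the $\frac{\pa U_{2^*}}{\pa x_j}$ and $x\cdot\nabla U_{2^*}+\frac{n-2}{2}U_{2^*}$. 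Since $Z_{jp_k}\to Z_{j,\infty}$ in $\mcd^{2,2}_0(\R^n)$ (by the decay estimates on $U_{p_k}$ and its derivatives), the $\Delta$-orthogonality conditions pass to the limit, and positive-definiteness of the corresponding Gram matrix forces $\phi_\infty = 0$. On the other hand, testing the original equation with $\phi_k$, using $\Delta\phi_k=-p_k V_{p_k}^{p_k-1}\psi_k$ to re-express the weighted integral, and invoking $\|\Delta\phi_k\|_{L^2}=1$ gives
\begin{equation*}
1 = p_k^2\int_{\R^n}V_{p_k}^{2(p_k-1)}\psi_k^2 \;\longrightarrow\; (2^*)^2\int_{\R^n}U_{2^*}^{8/(n-2)}\psi_\infty^2,
\end{equation*}
so $\psi_\infty = \phi_\infty \not\equiv 0$, a contradiction.

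The main obstacle is the decoupling of the limit system, which has no counterpart in Theorem \ref{thm-deg}. A secondary issue is handling the weight $V_{p_k}^{1-p_k}$, which grows polynomially at infinity when $p_k \approx \frac{n+2}{n-2}$; but wherever it appears against $(\Delta\phi_k)^2$, the first equation of the system lets me rewrite the integrand as $p_k^2 V_{p_k}^{p_k-1}\psi_k^2$ with a \emph{decaying} weight, after which the pointwise decay bounds on $\psi_k$ make dominated convergence routine.
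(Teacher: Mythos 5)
Your proposal is correct and follows essentially the same route as the paper: run Steps 1--3 of the proof of Theorem \ref{thm-deg}, pass to the limit in the linearized system, and decouple it by subtracting the two limit equations and testing against $\phi_\infty-\psi_\infty$ to get $\phi_\infty=\psi_\infty$ solving \eqref{eq-lin-so}, after which non-degeneracy of the Yamabe linearization and the orthogonality conditions give the contradiction. Your rewriting of the weighted integral $\int(-\Delta U_{p_k})^{1/p_k-1}(\Delta\phi_k)^2$ as $p_k^2\int V_{p_k}^{2(p_k-1)}\psi_k^2$ in the non-vanishing step is a sensible (indeed necessary) adjustment of the paper's ``$1+o(1)$'' computation, which as written relies on $p_k\to1$.
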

\begin{proof}
The proof goes along the same way to the proof of Theorem \ref{thm-deg} except Step 4.

In order to facilitate Step 4, we have to prove that $\phi_{\infty}$ is a solution of \eqref{eq-lin-so}.
Taking $k \to \infty$ in \eqref{eq-lins} and using $U_{2^*} = V_{2^*} = w_{b_n,0}^*$ (which was confirmed in Proposition \ref{prop-cpt-so}), we obtain
\[\int_{\R^n} \nabla \phi_{\infty} \cdot \nabla \vph = \(\frac{n+2}{n-2}\) \int_{\R^n} U_{2^*}^{4 \over n-2} \psi_{\infty} \vph\]
and
\[\int_{\R^n} \nabla \psi_{\infty} \cdot \nabla \vph = \(\frac{n+2}{n-2}\) \int_{\R^n} U_{2^*}^{4 \over n-2} \phi_{\infty} \vph.\]
We subtract the second equation from the first equation, and then put $\vph = \phi_{\infty} - \psi_{\infty}$. Then we get
\[0 \le \int_{\R^n} |\nabla (\phi_{\infty} - \psi_{\infty})|^2 = - \(\frac{n+2}{n-2}\) \int_{\R^n} U_{2^*}^{4 \over n-2} (\phi_{\infty} - \psi_{\infty})^2 \le 0.\]
Therefore, $\phi_{\infty} = \psi_{\infty}$ solves \eqref{eq-lin-so}. The rest of the proof remains the same.
\end{proof}

Arguing as in the proof of Corollary \ref{cor1}, we derive the following result from the previous theorem.
\begin{cor}\label{cor2}
There exists a small number $\ep_3 \in (0,\ep_2]$ such that if $|p-{n+2\over n-2}| \le \ep_3$  and $(U_p,V_p)$ is the unique positive ground state of \eqref{eq-LEs} with $U_p(0) = 1$,
then the solution space of the linear equation
\[\begin{cases}
-\Delta \phi = p V_p^{p-1} \psi &\text{in } \R^n,\\
-\Delta \psi = q U_p^{q-1} \phi &\text{in } \R^n,
\end{cases}
\quad \lim_{|x|\to\infty} (\phi(x),\psi(x)) = (0,0)\]
is spanned by
\[\(\frac{\pa U_p}{\pa x_1}, \frac{\pa V_p}{\pa x_1}\),\, \cdots,\, \(\frac{\pa U_p}{\pa x_n}, \frac{\pa V_p}{\pa x_n}\)\]
and
\[\(x \cdot \nabla U_p + \(n-2 - \frac{n}{p+1}\) U_p, x \cdot \nabla V_p + \(n-2 - \frac{n}{q+1}\) V_p\).\]
\end{cor}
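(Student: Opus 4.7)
The plan is to follow the argument used for Corollary~\ref{cor1}, now invoking Theorem~\ref{thm-deg-so} in place of Theorem~\ref{thm-deg}. Given a solution $(\phi,\psi)$ of the linear system in Corollary~\ref{cor2} with $\lim_{|x|\to\infty}(\phi(x),\psi(x))=(0,0)$, I would start from the identity
\[
\psi = -\frac{1}{p}\, V_p^{1-p}\, \Delta\phi,
\]
which is forced by the first equation. Applying $-\Delta$ to the first equation and substituting the second shows that $\phi$ is a distributional solution of the scalar fourth-order equation
\[
(-\Delta)\bigl((-\Delta U_p)^{1/p-1}(-\Delta\phi)\bigr) = p q\, U_p^{q-1}\phi \quad\text{in } \R^n
\]
treated in Theorem~\ref{thm-deg-so}.

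The central technical point is to upgrade the pointwise decay hypothesis to the functional-analytic condition $\phi\in\mcd^{2,2}_0(\R^n)$ needed to apply Theorem~\ref{thm-deg-so}. I would do this by re-running the iterative barrier argument from Step~3 of the proof of Theorem~\ref{thm-deg}: compare $\phi$ and $\psi$ against radial test functions of the form $M|x|^{-\mu-\eta}$, apply the maximum principle on annuli $\{1<|x|<R\}$, and alternate between the two equations of the system to successively sharpen the polynomial decay rate. In the present regime $p\approx\frac{n+2}{n-2}$ the lemma preceding Theorem~\ref{thm-deg-so} gives $U_p,V_p\lesssim(1+|x|)^{-(n-2)}$, which is enough to drive the iteration through in finitely many steps. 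Combined with standard interior elliptic regularity, the resulting decay yields $\Delta\phi\in L^2(\R^n)$ and hence $\phi\in\mcd^{2,2}_0(\R^n)$.

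Theorem~\ref{thm-deg-so} then produces constants $c_0,\dots,c_n$ such that
\[
\phi = \sum_{j=1}^{n} c_j\, \frac{\pa U_p}{\pa x_j} + c_0\Bigl(x\cdot\nabla U_p + \bigl(n-2 - \tfrac{n}{p+1}\bigr)U_p\Bigr).
\]
The corresponding $\psi$ is recovered element by element via $\psi = -p^{-1}V_p^{1-p}\Delta\phi$. For $\phi = \pa U_p/\pa x_j$, differentiating $-\Delta U_p = V_p^{p}$ in $x_j$ gives $\psi = \pa V_p/\pa x_j$. For the dilation direction, I would differentiate the scaling family $(U_{p,\delta},V_{p,\delta})$ at $\delta=1$ exactly as in \eqref{eq-U_pmu-2} and the computation following it in the proof of Corollary~\ref{cor1}, to obtain $\psi = x\cdot\nabla V_p + (n-2 - \tfrac{n}{q+1})V_p$. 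Linearity of the linear system then assembles the full spanning set.

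The main obstacle is the bootstrap from pointwise decay at infinity to $\mcd^{2,2}_0$-membership; once that is in place, the argument is a direct transposition of the proof of Corollary~\ref{cor1}.
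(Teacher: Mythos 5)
Your proposal is correct and follows essentially the same route as the paper: reduce to the scalar fourth-order linearized equation via $\psi=-p^{-1}V_p^{1-p}\Delta\phi$, use the decay hypothesis together with the barrier/bootstrap argument of Step~3 of Theorem~\ref{thm-deg} and elliptic regularity to place $\phi$ in $\mcd^{2,2}_0(\R^n)$, apply Theorem~\ref{thm-deg-so}, and recover $\psi$ for the translation and dilation directions exactly as in the proof of Corollary~\ref{cor1}. No substantive differences from the paper's argument.
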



\begin{thebibliography}{0}
\bibitem{ALT}
\textsc{A. Alvino, P.-L. Lions and G. Trombetti}, A remark on comparison results via symmetrization.
\emph{Proc. Roy. Soc. Edinburgh Sect. A} {\bf 102} (1986), 37--48.

\bibitem{A}
\textsc{T. Aubin}, Probl\'emes isop\'erim\'etriques et espaces de Sobolev, \emph{J. Differential Geometry} {\bf 11} (1976), no. 4, 573--598

\bibitem{CGS}
\textsc{L. Caffarelli, B. Gidas  and J. Spruck,} Asymptotic symmetry and local behavior of semilinear elliptic equations with critical Sobolev growth,  \emph{Comm. Pure Appl. Math.} {\bf 42 } (1989), no. 3, 271--297
\bibitem{CL}
\textsc{W. Chen and C. Li,} An integal system and the Lane-Emden conjecture. \emph{Discrete Contin. Dyn. Syst.} {\bf 24} (2009), 1167--1184.

\bibitem{CLO}
\textsc{W. Chen, C. Li and B. Ou,} Classification of solutions for a system of integral equations. \emph{Comm. Partial Differential Equations} {\bf 30} (2005), 59--65.

\bibitem{CK}
\textsc{W. Choi and S. Kim,} Asymptotic behavior of least energy solutions to the Lane-Emden system near the critical hyperbola. to appear in \emph{J. Math. Pures Appl.}

%\bibitem{CS}
%\textsc{M. Clapp and A. Salda\~na,} Entire nodal solutions to the critical Lane-Emden system. preprint. arXiv:1902.02150.

%\bibitem{DMPP}
%\textsc{M. del Pino, M. Musso, F. Pacard and A. Pistoia,} Large energy entire
%solutions for the Yamabe equation. \emph{J. Differential Equations} {\bf 251} (2011), 2568--2597.

\bibitem{DKP}
\textsc{S. Deng, S. Kim and A. Pistoia,} Linear perturbations of the fractional Yamabe problem on the minimal conformal infinity. to appear in \emph{Comm. Anal. Geom.}

\bibitem{FV}
\textsc{M. M. Fall and E. Valdinoci,} Uniqueness and nondegeneracy of positive solutions of $(-\Delta)^s u+u=u^p$ in $\R^N$ when $s$ is closed to $1$. \emph{Comm. Math. Phys.} {\bf 329} (2014), 383--404.

\bibitem{HV}
\textsc{J. Hulshof and R. C. A. M. van der Vorst,} Asymptotic behaviour of ground states. \emph{Proc. Amer. Math. Soc.} {\bf 124} (1996), 2423--2431.

\bibitem{Le}
\textsc{E. Lenzmann,} Uniqueness of ground states for pseudorelativistic Hartree equations. \emph{Anal. PDE} {\bf 2} (2009), 1--27.

\bibitem{Lin}
\textsc{C. S. Lin,} A classification of solutions to a conformally invariant fourth order equation in $\mathbf{R}^n$. \emph{Comm. Math. Helv.} {\bf 73} (1998), 206--231.

\bibitem{Li}
\textsc{P.-L. Lions,} The concentration-compactness principle in the calculus of variations. The limit case. I. \emph{Rev. Mat. Iberoamericana} {\bf 1} (1985), 145--201.

\bibitem{LW}
\textsc{G. Lu and J. Wei,} On a Sobolev inequality with remainder terms. \emph{Proc. Amer. Math. Soc.} {\bf 128} (1999), 75--84.

\bibitem{R}
\textsc{O. Rey,} The role of the Green's function in a nonlinear elliptic equation involving the critical Sobolev exponent,
\emph{J. Funct. Anal.} {\bf 89} (1990), no. 1, 1--52

\bibitem{Sou}
\textsc{Ph. Souplet,} The proof of the Lane-Emden conjecture in four space dimensions. \emph{Adv. Math.} {\bf 221} (2009), 1409--1427.
\bibitem{T}
\textsc{G. Talenti,} Best constant in Sobolev inequality, \emph{Ann. Mat. Pura Appl.} (4) {\bf 110} (1976), 353--372.
\bibitem{Vi}
\textsc{J. Villavert,} Qualitative properties of solutions for an integral system related to the Hardy–Sobolev inequality. \emph{J. Differential Equations} {\bf 258} (2015), 1685--1714.

\bibitem{WY}
\textsc{J. Wei and S. Yan,} Infinitely many solutions for the prescribed scalar curvature problem on $\mathbb{S}^N$. \emph{J. Funct. Anal.} {\bf 258} (2010), 3048--3081.

\bibitem{Xi}
\textsc{C.-L. Xiang,} Uniqueness and nondegenearacy of ground states for Choquard equations in three dimensions. \emph{Calc. Var. Partial Differential Equations} {\bf 55} (2016), Art. 134, 25pp.
\end{thebibliography}
\end{document}